\providecommand{\keywords}[1]
{
	\small	
	\textbf{\textit{Keywords---}} #1
}
\newtheorem{theorem}{Theorem}
\newtheorem{corollary}[theorem]{Corollary}
\newcommand{\assumptionautorefname}{Assumption}
\crefname{assumption}{\assumptionautorefname}{\assumptionautorefname{}s}
\newcommand{\ProblemName}{Rolling Stock Scheduling Problem\xspace}
\newcommand{\ProblemAcronym}{RSSP}
\newcommand{\CompositionModel}{Composition model\xspace}
\newcommand{\HypergraphModel}{Hypergraph model\xspace}
\newcommand{\TripSet}{T}
\newcommand{\TypeSet}{R}
\newcommand{\N}{\mathbb{N}}
\newcommand{\R}{\mathbb{R}}
\newcommand{\Sum}{\sum\limits}
\newcommand{\PIP}{P_{\text{\normalfont IP}}}
\newcommand{\PLP}{P_{\text{\normalfont LP}}}
\newcommand{\valIP}{\nu_{\,\text{\normalfont IP}}}
\newcommand{\valLP}{\nu_{\,\text{\normalfont LP}}}
\newcommand{\valMP}{\nu_{\,\text{\normalfont MP}}}
\newcommand{\olA}{{\overline{A}}}
\newcommand{\sign}{\mathop\text{\rm sign}}
\newcommand{\tick}{\mathop\text{\rm tick}}
\newcommand{\tock}{\mathop\text{\rm tock}}
\newcommand{\IPC}{{\rm IP_C}}
\newcommand{\IPM}{{\rm IP_M}}
\newcommand{\HD}{\text{\normalfont HD}\xspace}
\newcommand{\HA}{\text{\normalfont HA}\xspace}
\newcommand{\HolA}{{\text{\normalfont H}\overline{\text{\normalfont A}}\xspace}}
\newcommand{\hD}{\text{\normalfont hD}\xspace}
\newcommand{\hA}{\text{\normalfont hA}\xspace}
\newcommand{\holA}{{\text{\normalfont h}\overline{\text{\normalfont A}}\xspace}}
\newcommand{\C}{\text{\normalfont C}\xspace} 
\newcommand{\LP}{\text{\normalfont LP}\xspace}
\newcommand{\IP}{\text{\normalfont IP}\xspace}
\newcommand{\MP}{\text{\normalfont MP}\xspace}
\def\permille{\ensuremath{{}^\text{o}\mkern-5mu/\mkern-3mu_\text{oo}}}
\pgfplotsset{compat=1.15}
\title{A Comparison of Models for Rolling Stock Scheduling}
\author[a]{Boris Grimm,\orcidlink{0009-0005-8080-9663}\,}
\author[b]{Rowan Hoogervorst\,\orcidlink{0000-0003-0358-9503}\,\thanks{A majority of the work was done while the author was working at Erasmus University Rotterdam, the Netherlands}}
\author[a]{Ralf Bornd\"{o}rfer\,\orcidlink{0000-0001-7223-9174}\,}
\affil[a]{Zuse Institute Berlin (ZIB), 14195 Berlin, Germany}
\affil[b]{Department of Technology, Management and Economics, Technical University of Denmark, Kongens Lyngby, 2800, Denmark}
\date{}
\begin{document}

\maketitle

\begin{abstract}
    A major step in the planning process of passenger railway operators is the assignment of rolling stock, i.e., train units, to the trips of the timetable.
    A wide variety of mathematical optimization models have been proposed to support this task, which we discuss and argue to be justified in order to deal with operational differences between railway operators, and hence different planning requirements, in the best possible way.  
    Our investigation focuses on two commonly used models,  the \CompositionModel{} and the \HypergraphModel{}, that were developed for Netherlands Railways (NS) and DB Fernverkehr AG (DB), respectively.
    We compare these models in a rolling stock scheduling setting similar to that of NS, which we show to be strongly NP-hard, and propose different variants of the \HypergraphModel{} to tune the model to the NS setting.
    We prove that, in this setting, the linear programming bounds of both models are equally strong as long as a  \HypergraphModel{} variant is chosen that is sufficiently expressive.
    However, through a numerical evaluation on NS instances, we show that the \CompositionModel{} is generally more compact in practice and can find optimal solutions in the shortest running time.
\end{abstract}

\keywords{Rolling Stock Scheduling, Public Transport Optimization, Integer Programming}

\section{Introduction}

Decision support tools based on optimization methods have proven their value throughout different stages in the planning process of passenger railway operators.
One of the problems that have benefited substantially from mathematical optimization methods is that of rolling stock scheduling, in which rolling stock is assigned to the trips in the timetable.
Optimization methods have shown to reduce the cost of rolling stock schedules for passenger railway operators, while at the same time increasing passenger satisfaction by offering a better match of supply and passenger demand. 
Indeed, such improvements have been documented for the rolling stock operations at both NS and DB, and they have been presented in the finals of two INFORMS Edelman competitions, see \citet{kroon2009new} and \citet{borndoerfer2021deutsche}. 

The literature offers a diverse set of models for rolling stock scheduling.
To a large extent, this diversity of models is due to the differences between the studied railway companies, both in terms of the available rolling stock and the operational constraints. 
Examples include differences in the planning horizon, being cyclical or non-cyclical, the extent to which maintenance of train units needs to be taken into account in the planning stage, and the feasibility or infeasibility of deadhead trips.
As a result, different concepts and ideas have been used to model these problems, and, likewise, different solution techniques were applied, such as the use of a commercial mixed-integer linear programming (MILP) solver \citep{fioole2006rolling}, column generation \citep{BorndoerferReutherSchlechteetal.2016,lusby2017branch}, Lagrangian relaxation \citep{cacchiani2013lagrangian} and heuristics \citep{cacchiani2019effective, hoogervorst2019variable}.

The literature on rolling stock scheduling can be categorized into several streams.
Each of these streams focuses on a particular rolling stock scheduling setting, and this setting is often represented by a common base model.
An example is the stream of papers focusing on the setting of NS, see, e.g., \citet{wagenaar2017maintenance} and \citet{kroon2015rescheduling}, which build on the model of \citet{fioole2006rolling}.
Similarly, multiple papers focus on the Train Unit Assignment Problem (TUAP) as proposed by \citet{cacchiani2010solving} and the Hypergraph model setting as introduced by \citet{BorndoerferReutherSchlechteetal.2016}.

While many of the core ideas in these streams are similar, subtle or not-so-subtle differences in the problem setting often lead to substantial formulation differences and even the need for applying different solution methods.
Given a (new) rolling stock scheduling problem, it is then often unclear which of the existing models and solution approaches is the most suitable one. 
In fact, little is known about the relative performance of different models for specific scenarios. 
A notable exception is the comparison by \citet{haahr2016comparison} between the models of \cite{fioole2006rolling} and \citet{lusby2017branch} in a setting that includes instances of NS and the Copenhagen Suburban Railway Operator DSB S-tog.
However, this comparison was based solely on numerical results and not on analytic arguments.

In this paper, we survey the rolling stock scheduling models of the literature and give insight into their relative performance by comparing two commonly used flow-based rolling stock scheduling models: the \CompositionModel{} of \citet{fioole2006rolling} and the \HypergraphModel{} of \citet{BorndoerferReutherSchlechteetal.2016}.
Our contributions in this paper are fivefold.
First, we categorize the models that have been proposed in the literature for rolling stock scheduling based on the operational context that they consider.
Second, we analytically compare the linear programming bounds provided by the \CompositionModel{} and the \HypergraphModel{}. 
It turns out that the \CompositionModel{} does not match the bounds provided by the \HypergraphModel{} in general, but it does so very well for the problems for which it was developed, namely, in a rolling stock context similar to the one at NS. 
Third, we propose for the comparison several new variants of the \HypergraphModel{} that implement different trade-offs between model conciseness and accuracy in an attempt to tune the model to the NS setting.
Fourth, we compare the two models through numerical experiments for instances of NS that are typically solved by the \CompositionModel{}. 
The \CompositionModel{} turns out to be best suited for these instances.
Fifth, we prove that the rolling stock scheduling problem is NP-hard in the strong sense, even in the NS setting and for a rolling stock fleet consisting of a single train unit type.   

The paper is organized as follows.
In \autoref{sec: problem definition}, we give a general description of the rolling stock scheduling problem.
In \autoref{sec: literature}, we categorize the existing rolling stock scheduling models based on several problem features.
We perform an analytical comparison of the \CompositionModel{} and the \HypergraphModel{} in \autoref{sec:models}, in which we also define some new variants of the \HypergraphModel{} and perform a complexity analysis.
We perform a numerical comparison between the \CompositionModel{} and the \HypergraphModel{} in \autoref{sec: numerical comparison}.
Finally, we conclude the paper in \autoref{sec: conclusion}.

\section{The \ProblemName{}}
\label{sec: problem definition}

In the \ProblemName{} (\ProblemAcronym{}), rolling stock is assigned to the trips in the timetable. 
Each trip indicates that a train drives from a departure station to an arrival station, with a fixed departure and arrival time. 
Some of the trips are naturally joined together into timetable services, which define a train service between two terminal stations of a railway line.
At these terminal stations, rolling stock then moves from one timetable service to another through so-called turnings.

The rolling stock that is available to operate the trips differs per railway operator. 
In the \ProblemAcronym{}, we assume rolling stock that is composed of self-propelled bi-directional train units, so without a separate locomotive, as is the case for many European passenger railway operators.
These train units can, generally, be coupled together to form \textit{compositions}, which differentiates rolling stock scheduling from, e.g., vehicle or aircraft scheduling.
Combining train units allows for more passenger transportation capacity on a trip.
Moreover, it generally requires less energy, crew members, and infrastructure capacity to run a train of multiple coupled units than running each of the units individually.

The compositions of train units are generally not fixed throughout the day, but can be changed at some of the stations at which a train stops.
This allows adjusting the offered capacity to varying passenger numbers over the day.
Composition changes may happen at either some in-between station or a terminal station of the timetable service.
Adding train units to a composition is often referred to as \textit{coupling} of train units, while removing train units is referred to as \textit{uncoupling} of train units.
Jointly, such coupling and uncoupling actions are referred to as \textit{shunting} of train units.
When train units are uncoupled from a composition, they are often parked at a shunting yard, in which case we speak about the \textit{pulling in} and \textit{pulling out} of train units to or from the shunting yard or depot.

When determining how a train unit moves from one trip to another, we need to satisfy business rules that are dependent on the operator.
In some cases, trips have an assigned successor trip and train units can only move from the predecessor trip to the successor trip, unless the train unit is uncoupled from the composition.
This is especially the case when two trips are part of the same timetable service.
In other cases, train units can move between any trips as long as this is possible with respect to the  time and the location of the trips.
In particular, \textit{deadheading}, i.e., running the train without any passengers, may be allowed to move from the arrival station of one trip to the departure station of the other.

Additional conditions may need to be satisfied when determining the trips that a train unit can operate during the planning horizon.
For example, maintenance restrictions often need to be satisfied, such as a maximum distance between two consecutive maintenance appointments.
In addition, restrictions generally apply to the compositions that are allowed for trips and the ways in which these compositions can be changed.
The length of a composition can, for example, be limited by the platform length of the stations that are passed, while coupling and uncoupling can often only happen at one side of the composition due to the station layout.

In the \ProblemAcronym{}, we are now asked to find an assignment of the available train units to the trips in the timetable.
The path for each train unit that is implied by this assignment should satisfy all the above-mentioned constraints.
In addition, the composition that is chosen for each trip in this assignment should be feasible, as well as the ways in which the compositions are changed at stations.
The result of the \ProblemAcronym{} is referred to as a rotation, or circulation, for the rolling stock units.

An example of a rolling stock circulation as can be encountered at NS is shown in \autoref{fig: circulation example}, which shows an optimal solution of instance DDZ of the test set to be discussed in \autoref{sec: numerical comparison}.
Each colored node on the left-hand side of this figure corresponds to a train unit that is available at the start of the day, the color indicating the type of this train unit.
Similarly, the nodes on the right-hand side indicate train units ending the day.
All in-between nodes represent the operation of a trip by train unit(s), where the size of the node stands for the number of train units that are part of the composition that operates the trip.
The arcs between the train units show how train units move from trip to trip, such that a path corresponds to the trips operated by a single train unit over the day.

A few things can be noted in the instance that are typical of instances of NS.
First, most trips have a successor trip on which all train units of the composition continue, meaning that shunting only happens at a limited number of places.
Moreover, many trips in the morning peak are operated by double train unit compositions, followed by trips operated by single train units around noon with lower demand.
The long arcs that cross this period represent parking at some depot, from which the units are pulled out later to service the afternoon peak with many double train unit compositions again.
After the afternoon peak, a lot of the train units are uncoupled again and either parked at the shunting yard overnight or used within a single train unit composition during this period with lower demand.
Clearly, the crucial point in such an instance is to get the shifting of units right in order to service the demand in the most efficient way.

\begin{figure}[htbp]
    \centering
    \includegraphics[width=\textwidth, height=0.4\textheight]
      {./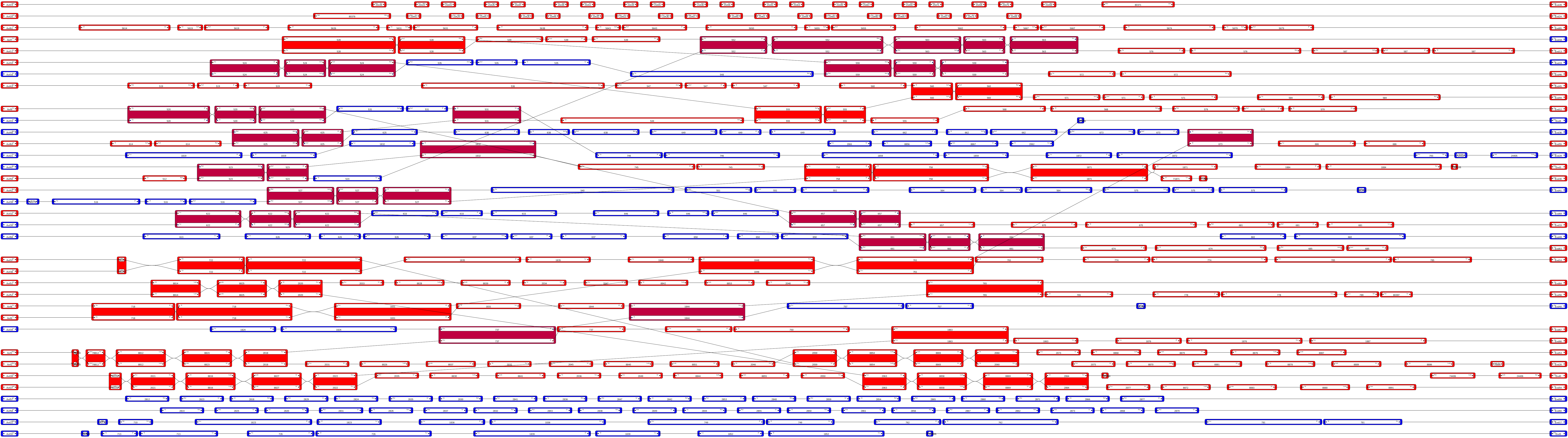}
	\caption{Example of a rolling stock circulation.}
	\label{fig: circulation example}
\end{figure}

\section{Feature-Driven Model Comparison}
\label{sec: literature}

The planning process of a railway passenger operator is generally composed of a number of planning steps that are carried out sequentially.
In each of these steps, a single operational problem is considered and the solution of this problem is the input to the next planning step.
An overview of the typical planning steps is given by, among others, \citet{huisman2005operations} and \cite{caprara2007passenger}.
In both of these papers, rolling stock scheduling is assumed to be performed after a timetable has been found and before the crew, i.e., drivers and guards, are scheduled.
In this paper, we thus focus solely on the rolling stock scheduling step.

Over the years, multiple models have been proposed for the rolling stock scheduling problem.
To a large extent, the differences between these models are the result of differences in the operational contexts of the various passenger railway operators.
These arise, e.g., from differences in the available rolling stock fleet, as well as from differences in the national (infrastructural) context.
In the remainder of this section, we categorize the available models according to five main features of the operational context that these models consider.
Moreover, we identify streams of papers in case multiple papers share the same core model.
In this way, we complement the model classifications that were provided earlier by \citet{thorlacius2015integrated} and \citet{schlechte2023bouquet}.
We limit our categorization to models for rolling stock scheduling that focus on a passenger, as opposed to freight, railway context and in which train units, as opposed to locomotives and carriages, are considered.
Moreover, we focus our categorization on papers considering rolling stock scheduling within a network, as opposed to a single-line, context.
A schematic overview of our categorization is given in \autoref{tab: literature categorization}.

\begin{sidewaystable}[htbp!]
    \definecolor{Gray}{gray}{0.94}
    \newcolumntype{a}{>{\columncolor{Gray}}c}
    
    \centering
    \resizebox{\textwidth}{!}{
    \begin{tabular}{lcacacacacacaccccc}
        \toprule
        Paper &  \multicolumn{3}{c}{Compos.} & \multicolumn{3}{c}{Turnings} & \multicolumn{2}{c}{Mainten.} & \multicolumn{3}{c}{Passengers} & \multicolumn{2}{c}{Other} & \\
        & \rotatebox[origin=c]{270}{Number} & \rotatebox[origin=c]{270}{Ordering} & \rotatebox[origin=c]{270}{Orientation} & \rotatebox[origin=c]{270}{Fixed} & \rotatebox[origin=c]{270}{Flexible} &
         \rotatebox[origin=c]{270}{Deadheading} &
         \rotatebox[origin=c]{270}{Distance} & \rotatebox[origin=c]{270}{Appointment} &
        \rotatebox[origin=c]{270}{Constraint} &
        \rotatebox[origin=c]{270}{Objective} &
        \rotatebox[origin=c]{270}{Dynamic} & \rotatebox[origin=c]{270}{Robustness} & \rotatebox[origin=c]{270}{Regularity} & Core Model & Sol. Method & Company\\
        \midrule
        \citet{fioole2006rolling} & & \checkmark & & \checkmark & & & & & & \checkmark & & & & Composition Model & MILP solver & NS \\
        \citet{peeters2008circulation} & & \checkmark & & \checkmark & & & & & & \checkmark & & & & & B\&P & NS \\
        \citet{cacchiani2010solving}  & \checkmark & & & & \checkmark & \checkmark & (\checkmark) & & \checkmark & & & & & TUAP Model & Col. Gen. Heur. & Regional \\
        \citet{cadarso2011robust} & \checkmark & & & & \checkmark & \checkmark & & & & \checkmark & & (\checkmark) & & \citeauthor{cadarso2011robust} Model & MILP solver & RENFE \\
        \citet{cacchiani2012railway} & & \checkmark & & \checkmark & & & & & & \checkmark & & \checkmark & & Composition Model & Benders Heur. & NS \\
        \citet{nielsen2012rolling} & & \checkmark & & \checkmark & & & & & & \checkmark & & & & Composition Model & MILP solver & NS \\
        \citet{cacchiani2013lagrangian} & \checkmark & & & & \checkmark & \checkmark & & & \checkmark & & & & & TUAP Model & Lagr. heuristic & Regional \\
        \citet{cadarso2014improving} & & \checkmark & & & \checkmark & \checkmark & & & & \checkmark & & (\checkmark) & & \citeauthor{cadarso2011robust} Model & Benders Decomp. & RENFE \\
        \citet{giacco2014rolling} & & & & & \checkmark & \checkmark & \checkmark & & & & & & & & MILP solver & Trenitalia \\
         \citet{lin2014two} & & \checkmark & & & \checkmark & \checkmark & & & \checkmark & & & & & \citeauthor{lin2014two} Model & 2-stage approach & Southern \\
        \citet{kroon2015rescheduling} & & \checkmark & & \checkmark & & & & & & & \checkmark & & & Composition Model & MILP solver & NS \\
        \citet{thorlacius2015integrated} & & \checkmark & & & \checkmark & \checkmark & \checkmark & & \checkmark & & & & & & Heuristic & DSB S-Tog \\ 
        \cite{BorndoerferReutherSchlechteetal.2016} & & & \checkmark & & \checkmark & \checkmark & \checkmark & & \checkmark & & & & \checkmark & Hypergraph Model & B\&P & DB\\
        \citet{lin2016branch} & \checkmark & & & & \checkmark & \checkmark & & & \checkmark & & & & & \citeauthor{lin2014two} Model & B\&P & Scotrail  \\
        \cite{Grimm2017Acyclic} & & & \checkmark & & \checkmark & \checkmark & \checkmark & & & & & & \checkmark & Hypergraph Model & B\&P & DB \\
        \citet{lusby2017branch} & \checkmark & & & \checkmark & & & (\checkmark) & & & \checkmark & & & & & B\&P & DSB S-tog \\
        \citet{wagenaar2017rolling} & & \checkmark & & \checkmark & & \checkmark & & & & \checkmark & & & & Composition Model & MILP solver & NS\\
        \citet{wagenaar2017maintenance} & & \checkmark & & \checkmark & & & & \checkmark & & \checkmark & & & & Composition Model & MILP solver & NS \\
        \citet{cacchiani2019effective} & \checkmark & & & & \checkmark & \checkmark & & & \checkmark & & & & & & Heuristic & Regional  \\
        \cite{GrimmBorndoerferReutheretal.2019} & & & \checkmark & & \checkmark & \checkmark & \checkmark & & & & & & \checkmark & Hypergraph Model & B\&P & DB \\
        \citet{zhong2019high} & & \checkmark & & \checkmark & & & \checkmark & & \checkmark & & & & & Composition Model & Matheuristic & CHSR  \\
        \citet{gao2020branch} & \checkmark & & & & \checkmark & (\checkmark) & \checkmark & & \checkmark & & & & \checkmark & & B\&P & CHSR \\
        \citet{gao2022weekly} & \checkmark & & & & \checkmark & (\checkmark) & \checkmark & & \checkmark & & & & & & B\&P & CHSR \\
        \cite{GrimmBorndoerferBushe2023} & & & \checkmark & & \checkmark & \checkmark & \checkmark & & & & & & \checkmark & Hypergraph Model & B\&P & DB \\
        \bottomrule 
    \end{tabular}}
    \caption{Comparison of the models in the literature based on different characteristics of the rolling stock scheduling problem: the extent to which details of the compositions are considered, the flexibility available in turnings between trips, the way in which maintenance is taken into account, the way in which passenger capacity is considered, and a few other characteristics. Parentheses around a check mark symbol indicate that a paper partially fulfills a certain requirement.}
    \label{tab: literature categorization}
\end{sidewaystable}

\subsection{Compositions}
A first difference between the models concerns the level of detail that is considered in modeling the compositions into which train units can be coupled.
On the one hand, \citet{cacchiani2010solving}, \citet{lusby2017branch}, \citet{lin2016branch}, and \citet{cadarso2011robust} consider only the number of train units of each type in a composition, but not the order of train units within a composition.
Such models are mainly used in a setting where there is only a single train unit type or when the compositions are short and where uncoupling and coupling can happen on both sides of the composition.

Models taking into account the exact order of the train units within a composition include those of \citet{fioole2006rolling} and \citet{peeters2008circulation}.
Note that these models do not track the individual train units but do track what type of train unit is in each position of the composition.
Taking into account the order of train units is important in these operational contexts, as coupling and uncoupling of train units is often only allowed on one side of the composition due to the station layout.
Hence, it is necessary to track the order of train units in the composition to determine if a certain train unit can be coupled or uncoupled.
A similar restriction is taken into account by \citet{lin2014two}, who determine the order of the train units in a composition in a second optimization step after determining in a first optimization step how many train units of each type are in a composition.

Finally, \citet{BorndoerferReutherSchlechteetal.2016} do not only take the order of train units in a composition into account, but also the orientation of the train units themselves, i.e., which side of each train unit is facing in the direction of travel.
This is done as infrastructure requirements in the setting of DB prevent some orientations of the train units in a composition and as the orientation of the individual train units is of importance for the seat reservations due to 1st and 2nd class being on opposite sides of train units.
Note that the models of \citet{fioole2006rolling} and \citet{peeters2008circulation} extend canonically to take orientations into account by creating a separate composition for each possible combination of orientations of the train units.

\subsection{Rolling Stock Turnings}
A second distinction between the models is in the way that turnings between trips are handled.
On the one hand, these turnings can be determined in advance.
In that case, each trip has a fixed follow-on trip, unless all train units of the trip are parked at the station after the trip. 
All train units that are part of the composition on the predecessor trip then move to the fixed successor trip or are uncoupled from the composition and can only be coupled to another trip after some fixed reallocation time.
Such a setting is considered, e.g., by \citet{fioole2006rolling} and \citet{lusby2017branch}.
Fixing the turnings is especially common in high-density networks, where there are limited possibilities to execute shunting at the stations.

Alternatively, determining the rolling stock turnings can be part of the decisions that are made in the model.
Examples include the models of \citet{cacchiani2010solving}, \citet{giacco2014rolling}, \citet{lin2014two}, and \citet{BorndoerferReutherSchlechteetal.2016}.
For these models, possible turnings are determined based on business rules that state which turnings are feasible and acceptable for the operators.
The model can then determine which of these turnings to take.
Note that in such settings each train unit in the composition can often follow its own turning, meaning that a composition is split into smaller parts that each turn to a different trip.
The advantage of allowing for flexible turning possibilities is that more efficient rolling stock circulations can be found, although often at the expense of more irregular turning patterns at the stations.

Another difference, which is strongly related to the understanding of turnings within the models, is the inclusion of deadheading trips.
In some models, such as those of \citet{fioole2006rolling} and \citet{lusby2017branch}, deadheading is not directly considered.
Instead, deadheading can only occur in those models in case it has been planned beforehand.
Models that do allow to assign deadheading within the model include those of \citet{giacco2014rolling}, \citet{cadarso2011robust}, \citet{BorndoerferReutherSchlechteetal.2016}, and \citet{wagenaar2017rolling}.
In these models, deadheading can be used either to move between trips in a turning or to move to a maintenance location.
As can be seen in \autoref{tab: literature categorization}, such flexibility in assigning deadheading trips is strongly linked to the flexibility in turning in the models, where all but one of the models that allow for flexibility in deadheading also do so in the turnings.

\subsection{Maintenance Requirements}
An important restriction when scheduling the rolling stock at many railway operators is that maintenance requirements need to be satisfied.
Such requirements ensure that enough maintenance is performed to adhere to safety and quality standards.
Some papers, like those of \citet{fioole2006rolling}, \citet{nielsen2012rolling}, \citet{cadarso2011robust}, and \citet{cacchiani2012railway}, do not consider any maintenance requirements.
In such models, it is often assumed, either implicitly or explicitly, that maintenance can be scheduled after a circulation has been found.
This is, for example, the case when maintenance can be executed during the evening hours when only few trains are operated.
Alternatively, maintenance may be scheduled closer to the day of operation during the time that a train unit is parked at the shunting yard of a station \citep[see, e.g.,][]{maroti2005maintenance}.

Other papers have considered distance or time-based maintenance requirements, where train units need to undergo maintenance after a certain amount of use.
These include the models of \citet{cacchiani2010solving}, \cite{BorndoerferReutherSchlechteetal.2016}, \citet{lusby2017branch}, \cite{GrimmBorndoerferReutheretal.2019}, and \cite{GrimmBorndoerferBushe2023}.
The approach taken to include such requirements differs between these models.
While \citet{lusby2017branch} and \cite{GrimmBorndoerferBushe2023} consider an individual path for each train unit through the network, \citet{BorndoerferReutherSchlechteetal.2016} consider an additional resource flow to keep track of the distance traveled by each train unit. 
An almost identical approach to the latter one is chosen in \citet{giacco2014rolling}, who investigate the benefit of additional deadheading options to reach maintenance facilities in order to decrease the total maintenance hours of a rotation.
A different point of view is taken by \citet{cacchiani2010solving}, who require that a certain fraction of the found train unit paths through the network allow for a maintenance activity. 
The approach of \cite{GrimmBorndoerferReutheretal.2019} is instead based an a cutting plane algorithm that cuts of rotations that violate maintenance constraints during the solution process.

A further type of maintenance requirement is considered by \citet{wagenaar2017maintenance}, where some of the train units have a fixed maintenance appointment.
Such an appointment specifies both the location and time at which the maintenance takes place and is usually planned close to the day of operation.
\citet{wagenaar2017maintenance} propose and compare three different models for rolling stock rescheduling to ensure that train units make these appointments.

\subsection{Passenger Capacity}
Another difference between the models is the way in which the passenger demand is taken into account.
On the one hand, passenger demand can be enforced through a constraint on the required passenger capacity, i.e., a minimum number of seats per trip.
This is done by, e.g., \citet{cacchiani2010solving}, \citet{BorndoerferReutherSchlechteetal.2016}, \citet{lin2016branch}, and \citet{thorlacius2015integrated}.
In these models, the main focus is then on minimizing the costs of the railway operator given these capacity constraints.
However, many of these models can easily be extended to also include passenger demand as an objective.

Alternatively, some models deal with passenger demand by also considering shortages of capacity.
Examples include the models of \citet{fioole2006rolling}, \citet{cadarso2011robust}, and \citet{lusby2017branch}, where any shortages of seats compared to the expected passenger demands are penalized in the objective function.
Note that these papers use fixed passenger demands, where the flow of passengers does not change based on the chosen rolling stock assignment.
\citet{kroon2015rescheduling} instead consider flexible passenger flows for a rescheduling setting in which the capacity of some trains may not be sufficient to accommodate all passengers.
As the interaction between rolling stock assignment and passenger flows is hard to handle in an integrated MILP model, these authors consider an iterative framework to solve this problem that alternates between rolling stock rescheduling and the rerouting of passengers.

\subsection{Other Model Differences}
The characterization above is certainly not complete concerning the differences between the models.
In \autoref{tab: literature categorization}, we state a few other differences.

The first is the inclusion of some form of robustness, which ensures that a good rolling stock circulation is found even when some of the details regarding the rolling stock scheduling problem are uncertain.
For example, \citet{cacchiani2012railway} propose a robust two-stage optimization model to better deal with large disruptions.
In this way, a rolling stock circulation is found that can be rescheduled well when such a large disruption occurs.
Also \citet{cadarso2014improving} look at the robustness of solutions, where they focus on creating a circulation that is likely to be robust in execution.
They do this by, e.g., penalizing shunting operations that are likely difficult to execute and by penalizing the expected delay that follows from a certain shunting action.

Another difference concerns the inclusion of regularity, i.e., the uniformity of the rolling stock plan over different days in the planning horizon; this is particularly relevant in case the timetable is (largely) periodic.
\citet{Borndoerfer2017Reoptimization}, e.g., focus on the regularity of turnings by penalizing in the objective function cases in which a different turning is chosen between the same trips on different days of the planning horizon.
\citet{gao2020branch} instead include a hard constraint, which requires that a trip that is operated on both days of a two-day planning horizon is executed by the same train sequence.
For a railway operator, inclusion of such regularity requirements, e.g., allows for a more compact representation of the circulation and makes rescheduling over multiple days easier.

\subsection{Discussion}
The categorization in \autoref{tab: literature categorization} shows that a number of streams can be identified in the literature, within which the papers use a similar base model and focus on a similar problem setting.
For example, numerous papers use the \CompositionModel{} as introduced in \citet{fioole2006rolling} and share a problem setting that considers the ordering of train units in compositions, fixed turnings between trips and the inclusion of passenger demand in the objective.
Other common streams include the papers using the \HypergraphModel{} introduced in \citet{BorndoerferReutherSchlechteetal.2016}, sharing a problem setting in which the orientation of train units in a composition and flexible turnings are considered, and those using the model for the TUAP introduced in \citet{cacchiani2010solving}, sharing a problem setting that focuses on the number of train units of each type in a composition and has flexible turnings.

The presence of these streams in the literature shows that the developed models have been highly adapted to their problem setting.
However, similarities can also be seen between the streams.
For example, the \HypergraphModel{} stream generalizes the properties of the \CompositionModel{} and TUAP model on many of the problem characteristics, e.g., considering the most detailed tracking of compositions.
In addition, variations have been considered of the models which bridge some of the gaps between the different streams.
An example includes the inclusion of deadheading in the \CompositionModel{} in \citet{wagenaar2017rolling}, bringing it closer to models that do consider deadheading.
A natural question would therefore be how these models relate to each other analytically.

To the best knowledge of the authors, little is generally known on the relative performance of the models and their theoretical applicability to problem settings that are different than the one for which they were proposed.
The only exception includes the numerical comparison made by \citet{haahr2016comparison} between the \CompositionModel{} and model proposed by \citet{lusby2017branch}.
Therefore, we focus in this paper on gaining insight on the performance of two models that can be associated to a stream in the literature: the \CompositionModel{} and the \HypergraphModel{}.
Such a comparison can both give theoretical insights for researchers working on rolling stock scheduling, as well as help practitioners choose an appropriate model to solve rolling stock scheduling problems.

\section{Analytic Model Comparison}
\label{sec:models}

In this section, we analytically relate the \CompositionModel as proposed by \cite{fioole2006rolling} and the \HypergraphModel as proposed by \cite{BorndoerferReutherSchlechteetal.2016}.
The notation used in our comparison is listed in Table~\ref{tab:symbols}.
Our analysis will revolve around the problem characteristics that we discussed in Section~\ref{sec: literature}, and one of our goals is to identify the relevant factors and their influence. This is somewhat challenging, as both approaches are general in the sense that they can be, and have been, adapted to different scenarios as discussed in \autoref{sec: literature}, such that it is not so clear what ``the \CompositionModel{}'' or, to a broader extent, ``the \HypergraphModel{}'' actually is. We therefore discuss several model variants. But whatever variant is considered, the models have been developed for specific settings, in which, inevitably, their particular strengths and weaknesses surface. Indeed, we will argue that the \CompositionModel is preferable to the \HypergraphModel in a setting like the one at NS, in which potential advantages of the \HypergraphModel{} do not materialize. Our point is that there is no ``best model'' that suits all rolling stock scheduling problems, it is, rather to the contrary, important to choose the right model for a particular application. 

\begin{table}[ht]
    \centering
    \footnotesize
    \begin{tabular}{ll}
    	 \toprule
         Symbol     & Meaning \\
         \midrule
         $t\in T$   & timetabled trip \\
         $c\in C$   & connection between two trips \\
         $p\in P$   & train composition \\
         $t^+, t^-$ & departure, arrival (event) associated with trip $t$ \\
         $n\in [n_{\max}]$ & position in train composition \\
         $r\in R$   & train unit type \\
         $d\in D$   & depot (to park train units) \\
         $G=(V,H)$  & hypergraph with event-activity nodes $V$ and hyperarcs $H$; we write $H(G)=H$ etc.\\ 
         $G'=(V,B)$ & base graph with event-activity nodes $V$ and train unit activity arcs $B$ \\
         $H_1, H_{\geq 2}$ & hyperarcs consisting of one or more than one base arc \\
         $A=H_{1,D}$ & set of all pull-in/out, parking, and direct connection arcs w.r.t.\ a model \\ 
         $b_v$      & train unit balance at node $v$ \\
         $\hD, \hA, \HD, \HA, \C$ & model variants: $\rm h/H/C$ = small/full/composition hypergraph, $\rm D/A$ = depot/direct connections \\
         $G_{\rm M}$      & hypergraph variants,  $\rm M\in\{\hD, \HD, \hA, \HA, \C\}$ \\
         $p_a$      & path in $G_\hD, G_\HD$ represented by direct connection arc $a$ in $G_\hA, G_\HA$ \\
         $\olA$    & closure of (all possible) direct connection arcs \\ 
         $G_\holA, G_\HolA$ & hypergraph variants with all possible direct connection arcs  \\
         $x,x'$     & hyperflow in $G$ and corresponding base flow in $G'$ \\
         $V^{\leq v}_D$ & set of depot nodes $V_{D,r(v)}$ of type $r(v)$ up to and including node $v\in V_D$ (w.r.t.\ time) \\ 
         $0_v$      & first node in $V^{\leq v}_D$ for depot node $v\in V_D$ (inventory start) \\
         $H^+_v, H^-_v$ & set of composition change hyperarcs that imply a pull-out/in 
                          of a train unit from depot node $v$ \\
         $\nu^r_h$  & number of train units of type $r$ pulled out/pulled in because of 
                      hyperarc $h\in H^+\cup H^-$ \\
         $\IPM$   & IP model associated with model $\rm M$, $\rm M\in\{\hD, \HD, \hA, \HA, \C\}$\\
         $\PIP(M), \PLP(M)$  & integer and fractional polytope associated with model $\IPM$, $\rm M\in\{\hD, \HD, \hA, \HA, \C\}$ \\ 
         $\nu_\IP(M), \nu_{\LP}(M)$ & optimal objective/LP relaxation value associated with model $\IPM$, $\rm M\in\{\hD, \hA, \HD, \HA, \C\}$ \\
         \bottomrule
    \end{tabular}
    \normalsize
    \caption{List of symbols used in the analytic comparison. We use indices to denote subsets, e.g., $P_t\subseteq P$ is the set of compositions for train $t\in T$.}
    \label{tab:symbols}
\end{table}

\subsection{Flows in Graph-Based Hypergraphs}

Rolling stock scheduling problems can be modeled in terms of hyperflows. To this purpose, we consider graph-based directed hypergraphs $G=(V,H)$. They consist of nodes $V$ and hyperarcs $H$, where each hyperarc is a union of disjoint arcs from an underlying directed base graph $G'=(V,B)$ on the same set of nodes.
Let us denote by $H_1 \subseteq H$ the hyperarcs that consist of a single arc (and are hence standard arcs), and by $H_{\geq 2}=H\setminus H_1$ all genuine hyperarcs. If each arc in the base graph models a movement of an individual train unit, a path a sequence of such movements, and a flow the movement of a fleet of train units, then a hyperarc in an associated graph-based hypergraph models the joint movement of several train units in a train composition, and a hyperflow the movement of a fleet in compositions.
A hyperflow is thus a vector $x\in \R_{\geq 0}^H$ that satisfies flow conservation constraints $x(\delta^+(v))-x(\delta^-(v))=b_v$ at every node $v$ subject to node balances $b_v$; here and elsewhere we write $x(H')=\sum_{h\in H'}x_h$ for $H'\subseteq H$.
These node balances will be mostly 0, but can also model surplus and deficit train unit inventories. As each hyperarc is a union of standard arcs, a hyperflow $x$ in $G$ defines a flow $x'$ in $G'$ by breaking the hyperarcs into arcs as 
\[
  x'_a:=\sum_{h\ni a} x_h,  \qquad \forall a\in A
\]
w.r.t.\ the same node balances.
Hence, every hyperflow in a graph-based hypergraph decomposes into paths (in the base graph), just like a standard flow.   

\subsection{The NS Setting: Basic Assumptions}

To model a rolling stock scheduling problem in terms of graph-based hyperflows, one introduces nodes that identify the types, positions, and orientations of the train units in a train composition of a particular trip, and hyperarcs that describe the movements and turns of the train units in these compositions, connecting appropriate pairs of nodes. 
One possible choice (to be discussed later) are nodes of the form $v=(t^\pm,r,n)$, where $t\in T$ denotes the trip, $+$ or $-$ departure or arrival, $r\in R$ the train unit type, and $n\in [n_{\max}]$ the train unit position in the composition.
Here, $n_{\max}$ is the maximum number of train units in a composition.
Some companies like DB also use an orientation $o\in\{\tick,\tock\}$, where $\tick$ denotes a forward and $\tock$ a backward orientation w.r.t.\ the driving direction. 
Trip hyperarcs can then connect appropriate departure and arrival nodes of a trip, and turn hyperarcs can connect arrival and departure nodes of a trip and a follow-on trip. 

How this general idea is implemented depends on the scenario at hand. 
We consider in this paper an  ``NS setting'', that restricts train unit movements on connections between timetabled trips in a way that is tailored to the situation at Netherlands Railways. Namely, an NS setting has the following basic properties:
\begin{itemize}
\item[i)] Deadheading can only occur when an explicit ``dummy trip'' has been added to the timetable beforehand.
\item[ii)] Train units can (only) be coupled to or uncoupled from trains via depots at stations, i.e., train units that are uncoupled move into the depot, and train units that are coupled to  a compostion come out of the depot.
\item[iii)] Most trips have exactly one follow-on, and exactly one predecessor trip; a pair of such trips forms a so-called 1-to-1 connection. 
\item[iv)] There are (a few) trips whose composition can be split into at most two parts, or joined from at most two parts; such trips then have at most one predecessor/successor trip, respectively.
The first-mentioned trips form what we call a 1-to-2 or a 2-to-1 connection, and it is not allowed to join a split, or to split a join, i.e., 1-to-2 and 2-to-1 connections must be disjoint. 
\end{itemize}
The above properties lead to a set of defined connections between trips, which we will denote by $C$.

The NS setting aims at a service that operates several lines (at high frequencies) that give rise to sequences of follow-on trips. Train units are coupled to and uncoupled from the trains at relatively many stations to adjust for capacity needs, parking them at the stations in sidings for later reuse. At a few places, trains are split or joined. If deadhead trips are necessary, they are added manually as ``dummy trips''. 

\subsection{Modelling Rolling Stock Rotations by Hyperflows}

A typical turn in an NS setting is a connection that involves some train units that continue with the follow-on trip(s), some that pull-in to the station, and some that pull-out, usually, but not necessarily, only one of the latter two. 
These activities belong together and can be modeled in terms of a single hyperarc. 
A more compact alternative that we will use in this paper is to break off the pull-in and pull-out activities and model them by separate arcs. 
To this purpose, one sets up a station timeline for each train unit type, with a node for each possible arrival of a pull-in activity and each possible departure of a pull-out activity.
We then add pull-in arcs that connect appropriate arrival nodes to the timeline, pull-out arcs that connect the timeline to appropriate departure nodes, and parking arcs that connect timeline nodes that are subsequent in time, including an initial and a terminal node that represent the start and end inventory. 
We call such a timeline a depot, and there is one depot for each train unit type at every station. We denote the set of all depots by $D$. Note that the train unit flow on parking arcs must be allowed to take general non-negative integer values, which makes train unit tracking impossible.
We remark, though, that train unit tracking can be handled by introducing separate parking tracks for individual train units, but this blows up the model and introduces degeneracy, and we will not consider this option in this paper.  

\begin{figure}[htb] 
  \centering
  \begin{subfigure}[b]{0.49\textwidth}
    \centering
    \includegraphics[scale=1.4]{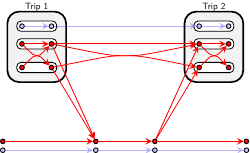}
    \vspace{0.58cm}
    \caption{$G_\hD$}
    \label{fig:hD}
  \end{subfigure}
  \begin{subfigure}[b]{0.49\textwidth}
    \centering
    \includegraphics[scale=1.4]{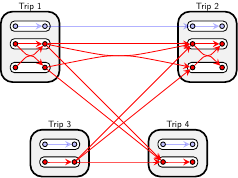}
    \caption{$G_\hA$}
    \label{fig:hA}
  \end{subfigure}
  \begin{subfigure}[b]{0.49\textwidth}
    \centering
    \includegraphics[scale=1.4]{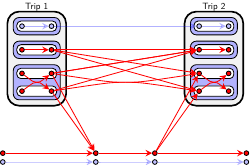}
    \vspace{0.44cm}
    \caption{$G_\HD$}
    \label{fig:HDM}
  \end{subfigure}
  \begin{subfigure}[b]{0.49\textwidth}
    \centering
    \includegraphics[scale=1.4]{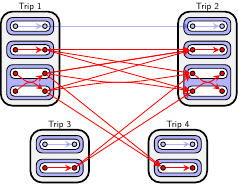}
    \caption{$G_\HA$}
    \label{fig:HAM}
  \end{subfigure}
  
  \caption{Illustration of hypergraph variants: h = small hypergraph, H = full hypergraph, D = depot connections, A = direct connections.}
  \label{fig:small_Hypergraphs}
\end{figure}

The model resulting from the above construction is illustrated in \autoref{fig:hD} and denoted $G_\hD$.
Here, $\rm h$ stands for a ``small hypergraph'', and $D$ for train unit transfers via a depot. 
There are two trips (Trip 1 and 2), which can be serviced by a single train unit composition of a blue train unit, a single train unit composition of a red train unit, or a double train unit composition of two red train units.
These options are modeled by blue standard arcs, red standard arcs, and red hyperarcs, respectively. 
Trip 1 is connected to Trip 2 by arcs and hyperarcs that model straight continuations of these compositions. There is also the option to operate Trip 1 with a red double train unit composition, and to continue with a red single train unit composition. 
In this case, the red train unit at (the lower) position 2 pulls out via a standard arc to be parked at the depot. 
Or the other way round, we arrive with a red single train unit composition, and pull a red train unit out of the red depot to continue with a red double train unit composition. 

This model $G_\hD$ has some problems to control the feasibility and cost of train unit transfers to and from a depot, namely, that one can pull-in and out from any matching depot node, which might not be feasible in practice. For instance, in the example in \autoref{fig:hD}, we cannot forbid that the red double train unit composition is continued with the turn for the red single train unit composition and that Trip 2 is afterwards continued with the red double train unit composition, which can be achieved by using a pull-out and successive pull-in of the second train unit.
In practice, this might be an option that a railway operator wants to rule out.
If such a situation arises, one can resort to direct connections without any depot, as illustrated in \autoref{fig:hA}.
This model variant is denoted by $G_\hA$, where $A$ stands for transfers via ``direct connection arcs''.
In this model variant, depot nodes are only needed for the start and end inventories; we omit these nodes in the model snippets in our illustrations. Of course, the gain of control by direct connections increases the model size.

\begin{figure}[htbp]
\centering
\begin{subfigure}[b]{0.49\textwidth}
  \centering
  \includegraphics[page=2,scale=1.05]{./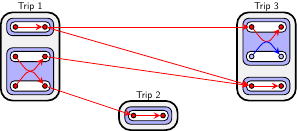}
  \caption{Coupling Situation 1 for $G_\hA$.}
  \label{fig:ip difference hA}
\end{subfigure}
\begin{subfigure}[b]{0.49\textwidth}
  \centering
  \includegraphics[page=1]{./tikzpictures/IP-hAvsHA.pdf}
  \caption{Coupling Situation 1$'$ for $G_\HA$.}
 \label{fig:ip difference HA}
\end{subfigure}
\begin{subfigure}[b]{0.49\textwidth}
  \centering
  \vspace*{2mm}
  \includegraphics[page=2,scale=1.3]{./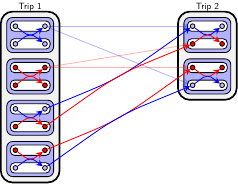}
  \caption{Coupling Situation 2 for $G_\hD$.}
  \label{fig:strictIeqB}
\end{subfigure}
\begin{subfigure}[b]{0.49\textwidth}
  \centering
  \vspace*{2mm}
  \includegraphics[page=1,scale=1.05]{./tikzpictures/Lp-hAVsHA.pdf}
  \caption{Coupling Situation 2 for $G_\HD$.}
  \label{fig:strictIeqA}
\end{subfigure}\caption{Coupling situations in full and small \HypergraphModel{}s.}
\label{fig:strictIeq}
\end{figure}

There is another issue with the small \HypergraphModel{}s $G_{\rm h\cdot}$ that is illustrated in \autoref{fig:ip difference hA} for the direct arc connection case $G_\hA$. In this instance of, say, Situation 1, the two train units of the pure red composition can continue in various compositions, all of which are purely red. In the small \HypergraphModel{}, however, there is an option to pull a blue train unit from the depot to continue with one red train unit in a mixed composition (whose actual predecessor composition is not shown).
In this Situation 1, the model accommodates an illegal coupling that produces an infeasible solution, i.e., the small \HypergraphModel{} is not correct.
But even if this is allowed in, say, an extended Situation 1$'$, a problem to express coupling costs in terms of arc costs comes up, because the initial and terminal composition of a composition change arc is unknown. For example, the top red arc in 
\autoref{fig:ip difference hA} represents three transitions of a red train unit between three different pairs of compositions, namely, between two red single train unit compositions, between a red single and a mixed double train unit composition, and between a red double and a red single train unit composition. Because of the first transition, this arc can bear no coupling costs at all.
If such costs accrue in the other two transitions, they must be associated with complementing pull-in and pull-out arcs, i.e., the coupling costs must be sums of costs associated with pull-in and pull-out arcs, and these costs must be the same for all their target compositions. This ``additive'' coupling cost model is restrictive and, if applied conservatively, leads to an underestimation of true coupling costs.\footnote{For two train units, one can put coupling costs on depot arcs, but this does not work for longer compositions.}
It will turn out that such situations exist in real-world scenarios; in fact, Situation 1 in \autoref{fig:ip difference hA} comes directly from our test set. 

These problems of the small \HypergraphModel{}s can be eliminated by using separate copies of nodes for each composition, i.e., by using node labels $v=(t^\pm,p,r,n)$ that are extended by a composition coordinate $p\in P_t$ for all feasible compositions $P_t$ for trip $t$. This results in two more model variants that use train unit transfers via depots and via direct connection arcs, as illustrated in \autoref{fig:HDM} and \autoref{fig:HAM}, respectively.
These models are denoted as $G_\HD$ and $G_\HA$, where $H$ denotes a ``full hypergraph''.
They give complete control over the feasibility and costs of composition changes, but they are also larger. Compare, e.g., the small \HypergraphModel{} snippets for Situation 1$'$ in \autoref{fig:ip difference hA} with its full hypergraph counterpart in \autoref{fig:ip difference HA}, in which the above-discussed transition of the red train unit is modeled by three separate arcs. One can, of course, mix these variants and resort to a composition expanded hypergraph only where necessary, as has been done, e.g., by \cite{Reuther2017}, but we will not do that in this paper for the sake of simplicity of exposition. 

\autoref{fig:strictIeq} also illustrates a third and final problem with fractional hyperflows in the small \HypergraphModel{}s, i.e., with the LP relaxation associated with these models. The snippets in \autoref{fig:strictIeqB} and \autoref{fig:strictIeqA} contain connections in which the mixed double train unit compositions must continue, while the pure red double train unit compositions are turned into mixed ones by uncoupling a train unit (in the example the second one) and replacing it by one of the other type that is pulled-out of the depot and coupled either to the front or to the back.
If that happens, coupling costs have to be paid that are assigned in the full \HypergraphModel{} to the respective composition change arcs, see \autoref{fig:strictIeqA}. These costs can be avoided in the small \HypergraphModel{} by a suitably chosen fractional hyperflow, see the bold (hyper)arcs in \autoref{fig:strictIeqB}. 
Namely, 0.5 times the two pure compositions can be connected to 0.5 times the two mixed compositions without paying coupling costs on pull-out/in arcs. 

For all four hypergraph variants $G_{\rm M}$, $\rm M\in\{\HD, \HA, \hD, \hA\}$, the rolling stock scheduling problem can be formulated as the following minimum cost hyperflow problem: 
\[\begin{array}{>{\rm}lr@{\;}c@{\;}ll}
  (\IPM)&\min c^T x \\
  (i)   &x(\delta^-(v)) - x(\delta^+(v)) &=& b_v & \forall\; v\in V \\
  (ii)  &x(H_t) &=& 1 & \forall\; t\in T \\
  (iii) &x(H_c) &=& 1 & \forall\; c\in C \\
  (iv)  &x&\geq& 0 \\
  (v)   &x_h&\leq& 1 & \forall\; h\in H\setminus H_D \\
  (vi) &x&\multicolumn{2}{l}{\rm integer}\\
\end{array}\]
Here, $x$ is the train unit hyperflow; it is binary, except possibly on the parking arcs in the depot timelines, which we denote by $H_D$. ($\IPM$) (i) are the flow conservation constraints; the node balances are all zero except for the beginning and the end of the depot timelines, where the balances are used to model train unit inventories. ($\IPM$) (ii) are the flow constraints; they ensure that every timetabled trip is serviced by a train of a single configuration. ($\IPM$) (iii) is special for our NS setting; it ensures a unique composition change for each connection and, in particular, that in each connection at least one train unit continues with a follow-on trip.
We will discuss their relevance later. ($\IPM$) (iv) are the non-negativity constraints, (v) the upper bounds; note that there is no upper bound on the parking arcs $H_D$ in the depot timelines. Finally, (vii) are the integrality constraints, and the objective minimizes the cost over the hyperarcs. 

\subsection{Modelling Rolling Stock Rotations by Composition Changes}

The \CompositionModel{} can be derived from the full \HypergraphModel{} $G_\HD$ by (i) contracting all nodes that differ only w.r.t. the composition, which contracts genuine hyperarcs into sets of parallel arcs that are then no longer node disjoint, (ii) deleting all but one of such parallel arcs, and (iii) deleting all arcs incident to the depot timelines, leaving the depot nodes isolated. 
Note that the initial node contraction in step (i) can produce parallel pull-in and pull-out arcs, which are all deleted in step (iii).
\autoref{fig:composition model graph} illustrates this construction to obtain the \CompositionModel{} for a regular 1-to-1 connection, where the depot nodes are just placeholders for their timestamps.

The above construction contracts sets of arrival/departure nodes associated with a composition $p$ into a single node, trip hyperarcs and turn hyperarcs for 1-to-1 connections into standard arcs that connect the associated compositions, and, finally, turn hyperarcs of 1-to-2 or 2-to-1 connections into hyperarcs that consists of two arcs that meet in a common node, i.e., these arcs are no longer node disjoint.
We denote the resulting hypergraph by $G_\C$. In this model, most hyperarcs will actually be arcs, only the turn hyperarcs for 1-to-2 and 2-to-1 connections are genuine (and, as mentioned above, non-graph based) hyperarcs. Each turn can involve a composition change, which in turn can involve pull-outs or pull-ins of train units.
These pull-outs must of course be possible, i.e., the respective train units must be available. 
To ensure that, one associates with every depot node $v$, for train units of type $r=r(v)$, the set $V_D^{\leq v}$ of all depot nodes that precede $v$ in time, up to the initial inventory node that we denote by $0_v$.
Moreover, one associates to $v$ sets of composition change (hyper)arcs $H^+_v$ and $H^-_v$ that imply a pull-out and pull-in of $\nu_h^r\in\N_0$ train units of type $r$ from or to this depot, respectively. 
With this notation, the number of available train units at depot node $v$ is $b_{0_v}-\sum_{h\in H^+_v} \nu^r_h x_h +\sum_{h\in H^-_v} \nu^r_h x_h$, which must not be negative.

\begin{figure}
	\centering
	\includegraphics[scale=1.4]{./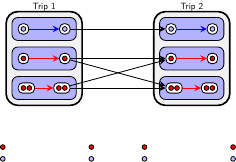}
	
	\caption{Illustration of the \CompositionModel{} (hyper)graph.}
	\label{fig:composition model graph}
\end{figure}

The rolling stock scheduling problem can be formulated in terms of the \CompositionModel{} as the following integer program:
\[\begin{array}{>{\rm}lr@{\;}c@{\;}ll}
  (\IPC)  &\min c^T x \\
  (i)     &x(\delta^-(v_p)) - x(\delta^+(v_p)) &=& 0 & \forall\; p\in P: \delta^-(p),\delta^+(p)\neq\emptyset\\
  (ii)    &x(H_t) &=& 1 & \forall\; t\in T \\
  (iii)   &x(H_c) &=& 1 & \forall\; c\in C \\
  (iv)    &b_{0_v}-\Sum_{h\in H^+_v} \nu^r_h x_h +\Sum_{h\in H^-_v} \nu^r_h x_h &\geq& 0 & \forall v\in V_D, r=r(v) \\
  (v)     &x&\geq& 0 \\
  (vi)    &x&\multicolumn{2}{l}{\rm binary}\\
\end{array}\]

Here, $x$ is the composition hyperflow, which is binary. ($\IPC$) (i) are the flow conservation constraints for all nodes $v_p$ that arise from a contraction of a composition $p$; some of these composition nodes have no predecessors or successors and hence no flow conservation is enforced for them. ($\IPC$) (ii) and (iii) ensure the selection of a unique composition for every trip and a unique composition change for follow-on trips in  connections; note that $H_t$ consists of arcs, while $H_c$ can contain hyperarcs for 1-to-2 and 2-to-1 connections. ($\IPC$) (iv) is the cut constraint described above that makes sure that the number of train units of type $r(v)$ doesn't exhaust the inventory at depot node $v$; note that one can enforce a certain end inventory by an artificial parking trip at the end of a timeline that consumes the required number of train units, such that no special constraint needs to be introduced. ($\IPC$) (v) and (vi) are the non-negativity and the integrality constraints, respectively.   

\subsection{The NS Setting: Model Harmonization}

In order to compare all models on a fair basis, we extend the NS setting by the following further assumptions:
\begin{itemize}
\item[v)] The cost of a direct connection arc $a$ in the connection arc models $G_\hA$ and $G_\HA$ is equal to the sum of the costs of the path $p_a$ that it represents in the depot models $G_\hD$ and $G_\HD$, i.e., $c_a=c(p_a)$ for all such arcs.
\item[vi)] There are no parking costs, and the costs of pull-in and pull-out arcs can be associated in a unique way with composition change hyperarcs, i.e., all costs can be associated with trip and composition change hyperarcs.
\end{itemize}
This is reasonable because the depot connection models $G_\hD$ and $G_\HD$, and even more the \CompositionModel{} $G_\C$, can not deal with costs that make finer distinctions. Let all other objective values be defined in the canonical way. When considering small \HypergraphModel{}s, which can not handle general costs for composition changes, we make an additional assumption:
\begin{itemize}
\item[vii)] The costs of hyperarcs in the composition expanded \HypergraphModel{}s $G_\HD$ and $G_\HA$ do not depend on the composition, i.e., the costs of two hyperarcs whose nodes differ only by compositions are the same.
\end{itemize}
Assumption vii) is made for the sake of the theoretical analysis. It is not always satisfied in our test set and its violation results in an underestimation of composition change costs in the small \HypergraphModel{}s.  

The logic of the \CompositionModel{} with its joins and splits via a depot is such that a train unit, once it has pulled-in to the depot, is available for arbitrary future pull-outs, i.e., the composition model does not have control over parked train units, and exactly the same holds for the depot connection variants of the \HypergraphModel{}. This is different in the arc connection variants of the \HypergraphModel{}, which can allow or forbid connections via the depot by introducing or not introducing the associated arc connections. These models are therefore stronger in this respect. To level the playing field, we consider the set $\olA$ as the closure of all possible arc connections, i.e., all arcs $a$ that shortcut a path $p_a$, and denote the associated arc connection models by $G_\holA$ and $G_\HolA$.
We will call such a setting unrestricted. 

\subsection{Model Comparison}

Denote by $\PIP(\rm M)$ and $\PLP(\rm M)$ the integer polyhedron and the LP relaxation, respectively, associated with model $\rm M\in\{\hD, \hA, \HD,\allowbreak \HA, \C\}$, and by $\valIP(\rm M)$ and $\valLP(\rm M)$ their optimal objective values.

\begin{theorem}\label{prop:inclusions} In an NS setting it holds for $\MP\in\{\LP,\IP\}$ that
\[\begin{array}[t]{c@{\;}c@{\;}c@{\;}c@{\;}c}
  \valMP(\HA) & \stackrel{a)}{\geq} & \valMP(\HD) & \stackrel{e)}{=} & \valMP(\C) \\[\medskipamount]
  {\scriptstyle c)}\; \rotatebox[origin=c]{90}{$\leq$} && \rotatebox[origin=c]{90}{$\leq$}\; {\scriptstyle d)}\\
  \valMP(\hA) & \stackrel{b)}{\geq} & \valMP(\hD)\rlap{.} \\
\end{array}\]
Here,  a) and b) hold with equality if $A=\olA$, i.e., if the setting is unrestricted. Moreover, a) and e) hold independently of assumption vii). 
\end{theorem}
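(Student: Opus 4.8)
The five relations organise into the diagram $\valMP(\HA)\ge\valMP(\HD)=\valMP(\C)$ together with $\valMP(\HA)\ge\valMP(\hA)$, $\valMP(\HD)\ge\valMP(\hD)$ and $\valMP(\hA)\ge\valMP(\hD)$, and the plan is to prove each edge by producing one explicit linear map $\Psi$ from the feasible region of the ``restricted'' model into that of the ``relaxed'' model with $c^T\Psi(x)=c^Tx$. Each such $\Psi$ will also send integer points to integer points, so it settles the $\LP$ and the $\IP$ statement at once: the relaxed optimum is at most the cost of $\Psi$ applied to a restricted optimal solution, hence at most the restricted optimum. The work is thus to build $\Psi$ in each case and to check it respects all constraints of $\IPM$ and the objective.

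\emph{Relation e).} I would use that $G_\C$ arises from $G_\HD$ by exactly the node contractions, parallel-arc pruning, and depot-arc deletions described above. By assumptions v)--vi) the cost of any composition change is the same well-defined quantity in both models, so there are cost-preserving maps both ways: dropping the depot-timeline variables and collapsing each composition's position nodes turns a $G_\HD$-hyperflow into a $G_\C$-hyperflow, and choosing for each used composition-change arc a canonical realising turn hyperarc together with the uniquely determined depot-timeline flow returns a $G_\HD$-hyperflow. The only non-bookkeeping point is that the depot-timeline flow conservation $\IPM$(i) is, under these maps, equivalent to the inventory cuts $\IPC$(iv): for a $G_\HD$-hyperflow the quantity $b_{0_v}-\sum_{h\in H^+_v}\nu^r_h x_h+\sum_{h\in H^-_v}\nu^r_h x_h$ is precisely the flow on the parking arc leaving depot node $v$, so $\IPC$(iv) is nonnegativity of that flow, and running this identity along a timeline reconstructs a feasible parking-arc flow from any $G_\C$-solution meeting $\IPC$(iv). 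As $\IPM$(ii)--(iii) and the objectives correspond transparently, e) follows; the small hypergraphs never enter, so this is independent of assumption vii).

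\emph{Relations a) and b).} The structural fact here is that a direct connection arc $a$ represents a concrete depot path $p_a$ of the matching depot model. I would take the path-expansion map $\Phi$ that fixes every trip hyperarc and pushes the flow $x_a$ onto each arc of $p_a$. Flow conservation survives because an arc is replaced by a path with the same endpoints while the interior depot nodes have balance $0$; the objective survives because $c_a=c(p_a)$ by assumption v); $\IPM$(ii) is untouched; $\IPM$(iii) survives because each $a\in H_c$ expands to a path containing exactly one turn hyperarc of $H_c$, and this assignment is onto $H_c$; and $\IPM$(v) survives because the flow $\Phi$ places on a pull-in or pull-out arc is bounded by the flow into, resp.\ out of, the incident trip event and hence is at most $1$, while the parking arcs in $H_D$ carry no bound. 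Thus $\Phi$ embeds the feasible region of $G_\HA$ into that of $G_\HD$, and likewise that of $G_\hA$ into that of $G_\hD$, which gives a) and b); only v), not vii), is used. If moreover $A=\olA$, every depot path is the image of some arc of $\olA$, so $\Phi$ becomes onto: one decomposes the base flow of a $G_\HD$-hyperflow into paths, shortcuts each maximal depot sub-path by the arc of $\olA$ representing it, and re-bundles the result into a $G_\HA$-hyperflow, giving the reverse inequalities and hence equality.

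\emph{Relations c) and d), and the main obstacle.} Finally, $G_\hD$ (resp.\ $G_\hA$) is $G_\HD$ (resp.\ $G_\HA$) with the nodes that agree outside the composition coordinate contracted, and I would use the summation map $\phi$, $\phi(x)_{\bar h}=\sum_{h\mapsto\bar h}x_h$. Contracting nodes replaces a block of flow-conservation rows by their sum, so $\phi$ preserves $\IPM$(i); trip hyperarcs remain pairwise distinct under the contraction (distinct compositions have distinct type sequences), so $\IPM$(ii) pushes forward verbatim; turn hyperarcs of a connection may be merged, but only with turn hyperarcs of the same connection, so $\IPM$(iii) is preserved; and $\IPM$(v) holds by the same flow-in/flow-out bound as before. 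Two hyperarcs identified by $\phi$ differ only in their composition labels, so by assumption vii) they have equal cost and $\phi$ preserves the objective. Hence $\phi$ embeds the feasible region of $G_\HA$ into that of $G_\hA$ and that of $G_\HD$ into that of $G_\hD$, which gives c) and d). The step I expect to be genuinely delicate is the re-bundling in the equality case of a)/b): turning an arbitrary path decomposition of a $G_\HD$-hyperflow --- fractional, in the $\LP$ case --- back into a bona fide $G_\HA$-hyperflow requires the base paths crossing the arcs of one hyperarc to carry matching amounts, so one must use a decomposition adapted to the hyperarc structure rather than an arbitrary one. The many constraint-by-constraint checks sketched above are, by contrast, routine.
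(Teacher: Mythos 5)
Your proposal is correct and follows essentially the same route as the paper: the same three cost-preserving maps (expanding each direct connection arc $a$ into its depot path $p_a$ for a)/b), aggregating flow over composition copies for c)/d), and dropping/reconstructing the depot flow for e)), applied verbatim to LP and to integer points so that both cases follow at once, with assumption v) doing the work for a)/b) and vii) for c)/d), exactly as the theorem's caveat requires. The only differences are cosmetic: where you reconstruct the parking-arc flow directly from the inventory identity along the timeline, the paper invokes the max-flow min-cut theorem, and the reverse (shortcutting) direction for the unrestricted equality case, which you spell out and flag as delicate, is left implicit in the paper --- and is in fact easier than you fear, since direct connection arcs are ordinary arcs, so only the depot sub-flow needs a path decomposition while every genuine hyperarc simply keeps its flow value.
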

\begin{proof}~
\begin{itemize}
  \setlength{\itemsep}{1pt}
  \setlength{\parskip}{0pt}
  \setlength{\parsep}{0pt}
  \item[a)] $A\subseteq\olA$ implies $\valMP(\HA)\geq\valMP(\HolA)$, hence it suffices to show that a) holds with equality for this case.
  Given that, let $\MP=\LP$ and $y\in\PLP(\HA)$. For each (hyper)arc $h\in H(G_\HD)$, define 
  \[
    x_h=\begin{cases} y_h, & h\in H\setminus H_D \\ \sum_{p_{a} \ni h} y_{a}, & h\in H_D, \end{cases} 
  \]
  i.e., route the flow $y_{a}$ on a direct connection arc $a$ along the pull-in, parking, and pull-out arcs of the associated path $p_{a}$. Then $x$ belongs to $\PLP(\HD)$ and, by assumption vii), produces the same cost as $y$. As this argument works also for integer $y$, which produces an integer $x$, the case $\MP=\IP$ holds as well.
  \item[b)] Analogous to a).
  \item[c)] Let $y\in\PLP(\HA)$ and denote for a (hyper)arc $h\in H(G_\hA)$ by $H^P_h$ the set of (hyper)arcs in $H(G_\HA)$ that project w.r.t.\ composition onto $h$, i.e., these hyperarcs are copies of $h$ that  (possibly) connect different compositions. For $h\in H(G_\hA)$, define 
  \[
    x_h=\sum_{g\in H^P_h} y_{g}, 
  \]
  i.e., aggregate the flow values on all copies. Then $x$ belongs to $\PLP(\hA)$ and, by assumption v), produces the same cost as $y$. As this argument works also for integer $y$, which produces an integer $x$, the case $\MP=\IP$ holds as well. Finally, note that assumption vii) was not used. 
  \item[d)] Analogous to c). 
  \item[e)] We note that the hyperarcs of the two models are in 1-1 correspondence. Any hyperflow $y$ in $\PLP(\HD)$ gives rise to a feasible composition flow $x=y_{H\setminus H_D}$ by simply omitting the depot arcs, and any composition flow $x\in\PLP(\C)$ can be extended to a hyperflow in $G_\HD$ by supplementing the appropriate pull-outs, parkings, and pull-ins, which can be done because of the max-flow min-cut Theorem. By assumption vi), these flows have the same costs. As this argument works also for integer $y$, which produces an integer $x$, the case $\MP=\IP$ holds as well. Assumption vii) was again not used. 
\end{itemize}
\end{proof}

We already gave examples that the inequalities c) and d) can be strict, i.e., the small \HypergraphModel{}s are in general weaker than the full, composition extended ones.
In fact, we showed that they can allow for infeasible solutions and underestimate coupling costs, i.e., they are genuine (over-)relaxations. 
In reality, at least in our test set, the difference is not as bad as one might expect, but not because these effects are rare, just because their overall cost impact is small. 
In fact, the situations depicted in the snippets in \autoref{fig:ip difference hA} and \autoref{fig:strictIeqA} are frequent in our test set.

We note the following consequence of Theorem~\ref{prop:inclusions}.

\begin{corollary}\label{cor:projection} In an NS setting it holds for $\MP\in\{\LP,\IP\}$ that
\[
  P_\MP(\HolA)|_{H\setminus\olA}=P_\MP(\HD)|_{H\setminus H_D} = P_\MP(\C).
\]
\end{corollary}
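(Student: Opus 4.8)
The plan is to read Corollary~\ref{cor:projection} as the assertion that the feasible regions of the three models become equal once projected onto their common set of coordinates, namely the ``core'' (hyper)arcs --- the trip hyperarcs together with the turn (hyper)arcs of the 1-to-1, 1-to-2, and 2-to-1 connections. Under the identifications built into the model constructions, this core coincides with $H(G_\C)$, with $H(G_\HD)\setminus H_D$, and with $H(G_\HolA)\setminus\olA$, so the statement is that $P_\MP(\HolA)|_{H\setminus\olA}$, $P_\MP(\HD)|_{H\setminus H_D}$ and $P_\MP(\C)$ all agree. I would establish this through the chain
\[
  P_\MP(\HolA)|_{H\setminus\olA}\;\subseteq\;P_\MP(\HD)|_{H\setminus H_D}\;=\;P_\MP(\C)\;\subseteq\;P_\MP(\HolA)|_{H\setminus\olA}
\]
for $\MP\in\{\LP,\IP\}$. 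Note that, since these are statements about polyhedra and not about objective values, none of the cost assumptions v)--vii) enters; only the structural correspondences already exhibited in the proof of Theorem~\ref{prop:inclusions}.

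For the middle equality I would simply unpack the argument for item~e) of Theorem~\ref{prop:inclusions}. The coordinate projection $y\mapsto y_{H\setminus H_D}$ sends $P_\MP(\HD)$ into $P_\MP(\C)$: omitting the depot arcs leaves a feasible composition flow, the cut constraints ($\IPC$)~(iv) being satisfied because each of their left-hand sides equals the (non-negative) flow on the corresponding parking arc of the depot timeline. Conversely, any $x\in P_\MP(\C)$ extends to a hyperflow of $G_\HD$ that agrees with $x$ on the core coordinates: the missing pull-outs, parkings and pull-ins can be supplied precisely because ($\IPC$)~(iv) is the max-flow--min-cut feasibility condition for the depot timelines, and the resulting routing is integral whenever the core flow is. Hence the projection is onto $P_\MP(\C)$, which gives the middle equality for both $\MP=\LP$ and $\MP=\IP$.

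For the first inclusion I would use the map constructed for item~a) of Theorem~\ref{prop:inclusions} in the case $A=\olA$: it reroutes the flow on each direct connection arc $a$ along the pull-in, parking and pull-out arcs of the path $p_a$, and leaves the core coordinates untouched. Thus it carries $y\in P_\MP(\HolA)$ to some $x\in P_\MP(\HD)$ with $x_{H\setminus H_D}=y_{H\setminus\olA}$, proving $P_\MP(\HolA)|_{H\setminus\olA}\subseteq P_\MP(\HD)|_{H\setminus H_D}$. For the last inclusion, given $x\in P_\MP(\C)$ I would run the same max-flow--min-cut extension as above, but route a unit pulled in at an arrival event $e_1$ directly to a pull-out at a departure event $e_2$ along the arc $a$ with $p_a$ running from $e_1$ to $e_2$, rather than through the timeline. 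Since $\olA$ is the \emph{closure} of all possible direct connection arcs, every admissible pull-in/pull-out pairing is available, so the governing min-cut --- and hence the feasibility condition for this routing --- is exactly the same as for the depot timeline, namely ($\IPC$)~(iv), which $x$ satisfies. One then checks that the resulting hyperflow lies in $P_\MP(\HolA)$: flow conservation holds by construction; the per-arc upper bound ($\IPM$)~(v) on each direct connection arc is met because such an arc carries at most the flow of a single pull-in (equivalently, pull-out) arc, which is bounded by $1$ via the trip constraints ($\IPM$)~(ii); and integrality is inherited from the integral routing. This yields $P_\MP(\C)\subseteq P_\MP(\HolA)|_{H\setminus\olA}$, and chaining the three relations finishes the proof.

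The only step with genuine content is the inclusion $P_\MP(\C)\subseteq P_\MP(\HolA)|_{H\setminus\olA}$: one must verify that an arbitrary composition flow, with its possibly intricate pull-in/pull-out pattern, can be realised by a \emph{non-negative integral} assignment of flow to direct connection arcs, and this is exactly where the hypothesis $A=\olA$ is indispensable --- for a restricted arc set $A\subsetneq\olA$ one would in general obtain only a strict inclusion of projections, in parallel with the strictness that is possible in items~a) and b) of Theorem~\ref{prop:inclusions}. The remaining verifications (flow conservation, the unit upper bounds on the new arcs, preservation of integrality) are routine once one observes that pull-in and pull-out arcs carry at most one unit, and everything else is bookkeeping with the coordinate identifications and a direct appeal to the maps already constructed for Theorem~\ref{prop:inclusions}.
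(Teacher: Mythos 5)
Your proof is correct and follows essentially the paper's route: the corollary is obtained from the coordinate-preserving maps already constructed for parts a) and e) of Theorem~\ref{prop:inclusions}, which is precisely how the paper presents it (as a direct consequence of that theorem, with no separate proof). Your third inclusion $P_\MP(\C)\subseteq P_\MP(\HolA)|_{H\setminus\olA}$ merely spells out the one direction the paper leaves implicit --- rerouting the depot flow onto the closure $\olA$ of direct connection arcs via the same max-flow/min-cut feasibility argument --- and you rightly observe that the cost assumptions v)--vii) play no role since only the polyhedra, not objective values, are compared.
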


In other words, the \CompositionModel{} is the projection of the unrestricted full \HypergraphModel{} onto the subspace of non-depot hyperarcs, both in an LP and an IP sense. This is remarkable, as projections of polyhedra involve a Fourier-Motzkin elimination, that usually does not result in an explicitly known description, not to speak of a description that is combinatorially meaningful. In this case, the projection eliminates the train unit flow for depot connections, and replaces primal flow conservation by a dual formulation in terms of cuts. 

\subsection{The Connection Constraints}

Theorem~\ref{prop:inclusions} hinges on the presence of the flow constraints $\IPM$ (iii) that stipulate the choice of exactly one composition change on every connection in all models. This constraint is special for the NS setting and the \CompositionModel{} but uncommon in the literature on the \HypergraphModel{}. If this constraint is omitted from the \HypergraphModel{}s (it cannot be omitted from the \CompositionModel{}), they lose theoretical strength, as the example in \autoref{fig:omitting_the_flow_constraints} shows. In the \CompositionModel{}, the mixed composition can only continue or split one train unit off, but the mixed composition cannot be turned around. This is, however, possible in the \HypergraphModel{} (in the version without connection constraints) by sending both train units to the depot, and pulling out two new train units to assemble the reverse composition without using a composition change hyperarc. 
While initially surprising, these connection constraints actually turn out to be redundant in our real-world test set, i.e., the instances are somewhat  restrictive and simply do not contain reasonable options to not use exactly one composition change.

\begin{figure}[htbp]
  \centering
  \begin{subfigure}[b]{0.49\textwidth}
    \centering
    \includegraphics[page=1,scale=1.05]{./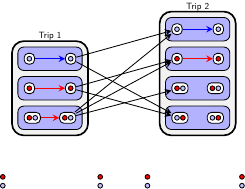}
    \caption{Composition Model}
  \end{subfigure}
  \begin{subfigure}[b]{0.49\textwidth}
    \centering
    \includegraphics[page=2,scale=1.3]{./tikzpictures/LPGapCompositionHypergraph.pdf}
    \caption{Full Hypergraph Model with Depot Connections}
  \end{subfigure}
  \caption{Omitting the flow constraints on composition changes weakens the \HypergraphModel{}s.}
  \label{fig:omitting_the_flow_constraints}
\end{figure}

\subsection{The NS Setting: Complexity}

The NS setting is sufficiently restrictive to suggest that it might have a different computational complexity than general multicommodity flow problems. This question has been considered by \cite{alfieri2006efficient}, who show the NP-hardness of a version of the problem in which feasible compositions are defined implicitly by numerical passenger demands. They provide a reduction from numerical three-dimensional matching that relies on an overall exact match of train capacities and demands, which is a rare scenario, and, as this argument involves numbers, does not show strong NP-hardness. The proof also uses a very large number of train unit types ($2n+1$ types for $3n$ trains and $6n$ trips), whereas in reality, the number of train unit types is a small constant. In fact, \cite{alfieri2006efficient} consider a problem with two train unit types, and the same holds for all the real world instances of our computational study. \cite{alfieri2006efficient} also consider a version with explicit compositions that is identical with our setting, but do not study its complexity. As the number of explicit compositions is in general not polynomial in numerical demand data, the complexity status of this problem is not necessarily the same, and has therefore, to the best of our knowledge, not been determined up to now. On the other hand, \cite{alfieri2006efficient} show that the problem can be solved in polynomial time if all parameters except the number of trips are constant. We will show now that the problem with explicit compositions is NP-hard in the NS-setting, in fact, that it is NP-hard in the strong sense, even for a train unit fleet consisting out of a single train unit type.

\begin{figure}[ht]
  \centering
  \includegraphics[width=0.99\textwidth]{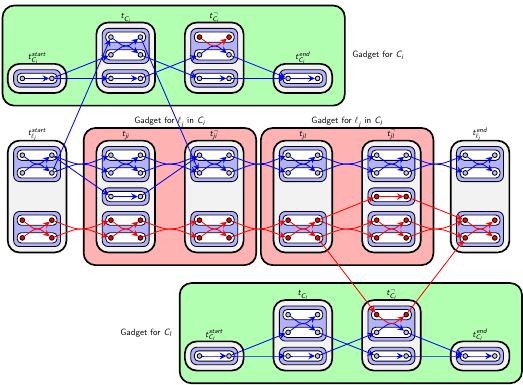}
  \caption{Reducing 3SAT to NS-\ProblemAcronym{}. The red and blue colors only highlight true and false rotations, they do not encode train unit types (there is only one type).}
  \label{fig:complexity2}
\end{figure}

\begin{theorem}\label{prop:complexity1}
  The \ProblemName{} in the NS setting (NS-\ProblemAcronym{}) is strongly NP-hard, even for a single train unit type. 
\end{theorem}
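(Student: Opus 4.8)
The plan is to reduce \textsc{3SAT} to NS-\ProblemAcronym{} in polynomial time. Since a \textsc{3SAT} instance carries no numerical data, a polynomial reduction in which the constructed \ProblemName{} instance uses only a constant number of train unit types (in fact one), constantly bounded trip demands, and polynomially bounded costs already yields \emph{strong} NP-hardness. Given a formula $\varphi$ over variables $x_1,\dots,x_n$ with clauses $C_1,\dots,C_m$, I would build an instance whose rolling stock circulations of cost at most a threshold $K$ are in bijection with the satisfying assignments of $\varphi$; the overall wiring is the one sketched in \autoref{fig:complexity2}, with a \emph{variable track} per $x_i$ and a \emph{clause track} per $C_j$. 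The single train unit type plays no role in encoding information --- all bookkeeping is done through the connection structure, the trip capacities, and the depot inventories.

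\emph{Variable gadget.} For each variable $x_i$ I would set up a dedicated closed sequence of timetabled trips, almost all of them joined by 1-to-1 connections as the NS setting prescribes, together with one controlled branching point at which the unit(s) servicing the track must commit to one of exactly two admissible continuations, a ``true'' routing $\rho_i^{T}$ and a ``false'' routing $\rho_i^{F}$. The branching is realized using the limited 2-to-1/1-to-2 capability (or, equivalently, an uncouple-into-depot / pull-out-later choice), and I would insert short 1-to-1 buffer segments so that no split is ever immediately followed by a join, keeping 1-to-2 and 2-to-1 connections disjoint as required in the NS setting. Trip demands along the track are chosen so that the composition size is forced everywhere on the track; this fixes how many units circulate on the gadget and, since there is no deadheading, rules out units leaking in from or out to other gadgets except along the connections we build explicitly.

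\emph{Clause gadget and correctness.} For each clause $C_j=\ell_{j1}\vee\ell_{j2}\vee\ell_{j3}$ I would add a short clause track containing one trip $t_j$ whose passenger demand forces a composition of at least two units, arranged in time and space so that the only train units that can possibly be coupled onto $t_j$ are those servicing the variable tracks of its three literals, and so that a unit of $x_i$ can reach $t_j$ \emph{only} when its track is on the routing that satisfies the corresponding literal (the ``true'' routing if $\ell_{jk}=x_i$, the ``false'' routing if $\ell_{jk}=\overline{x_i}$). Thus $t_j$ admits a feasible composition if and only if at least one literal of $C_j$ is satisfied. Correctness then has two directions: from a satisfying assignment, route each variable track along the corresponding $\rho_i^{T}$ or $\rho_i^{F}$ and send one satisfying unit through each $t_j$ and back, producing a feasible circulation of cost $K$; conversely, any feasible circulation forces each variable track onto one of its two admissible routings, which reads off an assignment, and the demand on each $t_j$ makes that assignment satisfy every clause. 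All gadgets have constant size, so the instance is polynomial and all its parameters are constant.

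The step I expect to be the main obstacle is making the gadgets \emph{tight} under the severe NS restrictions: almost all connections must be 1-to-1, the few 1-to-2 and 2-to-1 connections must be pairwise disjoint, deadheading is only via pre-planned dummy trips, and --- crucially --- there is a single train unit type, so one cannot use ``colours'' to keep the flows of different gadgets apart. One must therefore argue, purely from the timetable geometry, the composition-size constraints, and the depot inventories, that no parasitic circulation exists: for instance, that a single unit cannot simultaneously satisfy two clauses, that the depot's freedom to re-emit a parked unit at an arbitrary later time does not let a variable track change its effective truth value between clauses, and that the connection constraints ($\IPM$) (iii), requiring exactly one composition change per connection, do not render the intended solution infeasible. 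Careful scheduling of the time windows of the clause trips $t_j$ so that each overlaps only the intended literal-units, together with a counting argument on each variable track that pins down its two admissible routings, should settle these points.
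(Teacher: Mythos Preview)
Your proposal identifies the correct reduction (from \textsc{3SAT}) and the right overall architecture (variable gadgets enforcing a binary commitment, clause gadgets that can only be serviced if a ``satisfying'' unit is available). This is also what the paper does. The gap is in the mechanism you propose for encoding the truth value---a spatial \emph{routing} choice realised via 1-to-2/2-to-1 connections or depot pull-in/pull-out---and it is precisely the gap you yourself flag as the ``main obstacle''.

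In the NS setting the connection structure is fixed input: every trip must be serviced, and a 1-to-2 connection splits a composition so that \emph{both} successors receive units---it is not a disjunctive choice of successor. The only genuine decision is which composition from the input set $P_t$ services trip $t$, and which composition change from $H_c$ is used on connection $c$. With a single unit type and fungible depot inventories, any routing information is erased the moment a unit enters a depot, so ``careful scheduling of time windows'' and ``counting arguments'' cannot by themselves pin a variable track to one of two routings across all clauses. The paper sidesteps this entirely by exploiting that $P_t$ and the allowed composition changes are \emph{input data}: for each literal trip it declares two abstractly distinct double-unit compositions, a ``true'' and a ``false'' one (physically identical---two units of the single type), and admits only the transitions true$\to$true and false$\to$false along the literal train. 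This forces a global truth value per literal without any appeal to routing or depot bookkeeping. At the clause-gadget trips an additional single-unit composition is allowed, so one literal unit can detach, pull in, and be coupled into the clause train's double composition exactly when the literal's committed value satisfies the clause. The clause side is also more structured than your single high-demand trip: the paper uses four trips per clause with composition changes arranged so that exactly one of a ``positive'' and a ``negative'' trip must be operated as a double, guaranteeing that some literal---unnegated or negated---actually supplies a unit.

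In short, the missing idea is that the binary state lives in the \emph{composition label}, not in the path a unit takes. Once you allow $P_t$ to contain several formally distinct compositions over a single type and restrict the composition changes accordingly, the consistency and ``no parasitic circulation'' worries you raise dissolve, and the construction goes through with only 1-to-1 connections.
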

\begin{proof}
    \newcommand\rmstart{\text{start}}
    \newcommand\rmend{\text{end}}
    \newcommand\rmtrue{\text{}}
    \newcommand\rmfalse{\text{$\neg$}}
    The proof is by reduction from 3SAT. Consider an instance  $\bigwedge_{i\in I} C_i, C_i=\bigvee_{j\in J_i} \sign_{ij}\ell_j$ of 3SAT with clauses $C_i, i\in I$, and literals $\ell_j, j\in J$, that can be negated or unnegated according to their sign $\sign_{ij}$, each clause containing three of them, i.e., $|J_i|\equiv 3$.
    Let $I=[m]$ and $J=[n]$.
    
    In the corresponding NS-\ProblemAcronym{}, all train units are of the same type. There is a ``clause train'' that operates for each clause $C_i$ a gadget of four trips $t_{C_i}^\rmstart$, $t_{C_i}^\rmtrue$, $t_{C_i}^\rmfalse$, and $t_{C_i}^\rmend$ that commute between two stations $A$ and $B$, see \autoref{fig:complexity2} for an illustration.
    These trips are follow-on trips of each other in the given order and, moreover, $t_{C_{i+1}}^\rmstart$ is the follow-on trip of $t_{C_i}^\rmend$, $1\leq i<m$. Trip $t_{C_i}^\rmstart$ must be operated by a unique single train unit composition by a ``clause train unit''. The follow-on trip $t_{C_i}^\rmtrue$ and its follow-on trip $t_{C_i}^\rmfalse$ deal with the satisfaction of clause $C_i$, which we will attribute to one of its three literals (if two literals satisfy a clause, one of them can be chosen arbitrarily). This satisfying literal must be set to true if it is unnegated and to false if it is negated. Consider first the unnegated literals of clause $C_i$. One of these literals can satisfy the clause or none of them does. For the first case, provided $C_i$ contains an unnegated literal, there is a double train unit composition that pulls-in a second train unit (which will come from a ``true rotation'' of associated literal train units) in addition to the clause train unit. For the second case, there is a single train unit composition on which the clause train unit continues; if $C_i$ does not contain unnegated literals, this single train unit composition is the only one for trip $t_{C_i}$. These (one or two) compositions are successors of the single train unit composition on $t_{C_i}^\rmstart$. The same compositions are available for the trip $t_{C_i}^\rmfalse$, only this time the double train unit composition represents the satisfaction of clause $C_i$ by a negated literal (train unit). Now the construction is such that the double train unit composition on $t_{C_i}^\rmtrue$ must be followed by the single train unit composition on $t_{C_i}^\rmfalse$, and the single train unit composition on $t_{C_i}^\rmtrue$ by the double train unit composition on $t_{C_i}^\rmfalse$ s.t.\ exactly one of the two double train unit compositions is chosen. Finally, the (two or one, depending on whether $C_i$ contains a negated literal or not) compositions for $t_{C_i}^\rmfalse$ are both linked to a unique single train unit composition on $t_{C_i}^\rmend$, forcing the clause train unit to proceed through all four clause trips in sequence, and on to the next clause trip $t_{i+1}^\rmstart$ (if it exists). 
    
    There is also a ``literal train'' for every literal $\ell_j$. Its trips also commute between stations $A$ and $B$. There is an initial trip $t_{\ell_j}^\rmstart$, 
    followed by a sequence of follow-on trips through several gadgets, namely, one gadget for every clause $C_i$ in which $\ell_j$ appears.
    Each of these gadgets operates two trips $t_{ji}$ and $t_{ji}^\neg$ that operate at the same times as the clause trips $t_{C_i}$ and $t_{C_i}^\neg$ to which they correspond (we write $t_{ji}$ instead of $t_{\ell_jC_i}$ for lighter notation).
    These clause gadget trips are followed by a final trip $t_{\ell_j}^\rmend$.
    Each trip of such a literal train must be operated by one out of two double train unit compositions, which represent the setting of literal $\ell_j$ to true or false. With one exception in every set of clause gadget trips, the true composition of a literal trip is only linked to the true composition of the follow-on trip, and the false composition to the false composition of the follow-on trip, such that the two train units of the literal train must follow either an all true or an all false rotation. The one exception in every set of clause gadget trips depends on whether literal $\ell_j$ appears in clause $C_i$ in unnegated or negated form. Suppose the first; then $\ell_j$ can satisfy clause $C_i$ if it is set to true. To model this option, there is an additional single train unit composition for trip $t_{ji}$, that allows one of the literal train units to pull-out for the time of trip $t_{ji}$, which is also the time of the clause trip $t_{C_i}$, and to pull-in to the double train unit composition that satisfies this clause. After the trip, the train unit must pull-out of the clause composition again, and return to the literal double train unit composition for trip $t_{ji}^\neg$ to continue. If literal $\ell_j$ appears in clause $C_i$ in negated form, it can satisfy the clause if it is set to false. In this case, trip $t_{ji}^\neg$ offers a single train unit composition that allows a literal train unit to pull-out into the double train unit composition for trip $t_{C_i}^\neg$ that satisfies clause $C_i$. The following double train unit composition for the follow-on trip in the next clause gadget, or in the final trip $t_{\ell_j}^\rmend$, forces the train unit to return to the literal rotation of its truth value. 
    
    In this way, a satisfying truth assignment gives rise to a feasible rolling stock rotation schedule, and the converse also holds. The construction is polynomial and does not involve any numbers.
\end{proof}

The proof is illustrated in \autoref{fig:complexity2}. Stations $A$ and $B$ appear alternatingly in this figure. At the top and bottom are the four trips of two clauses $C_i$ and $C_l$, respectively. In the middle are the trips of the associated literal gadgets.  Here, the literal $\ell_j$ is unnegated in clause $C_i$, and negated in clause $C_l$. This can be seen by looking at the associated pull-in and pull-out movements, where pull-outs and corresponding pull-ins occur to trip $t_{C_i}$ and trip $t_{C_l}^\neg$.

\section{Numeric Model Comparison}
\label{sec: numerical comparison}
In this section, we complement the analytic comparison of the \CompositionModel{} and the different variants of the \HypergraphModel{} by a numerical comparison on a set of real-world instances from NS.
We are particularly interested in evaluating the performance of the linear programming bound provided by these models and their relative performance in terms of the time required   to find (near-)optimal solutions.
In the remainder of this section, we first introduce the test set and then discuss the numerical results.

\subsection{Instances}
Our test set is based on the 2018 timetable operated by NS, which is the largest passenger railway operator in the Netherlands.
\Cref{fig: dutch railway network} illustrates the network.
NS operates both intercity services and regional (sprinter) services, where the latter generally stop at each encountered station along a railway line.
Both intercity and sprinter trains operate in a relatively high frequency, with usually 2 to 4 trains per hour for each timetable service.

\begin{table}[htbp]
  \centering
  \begin{minipage}[b]{0.49\textwidth}
    \centering
    \includegraphics[scale=0.3]{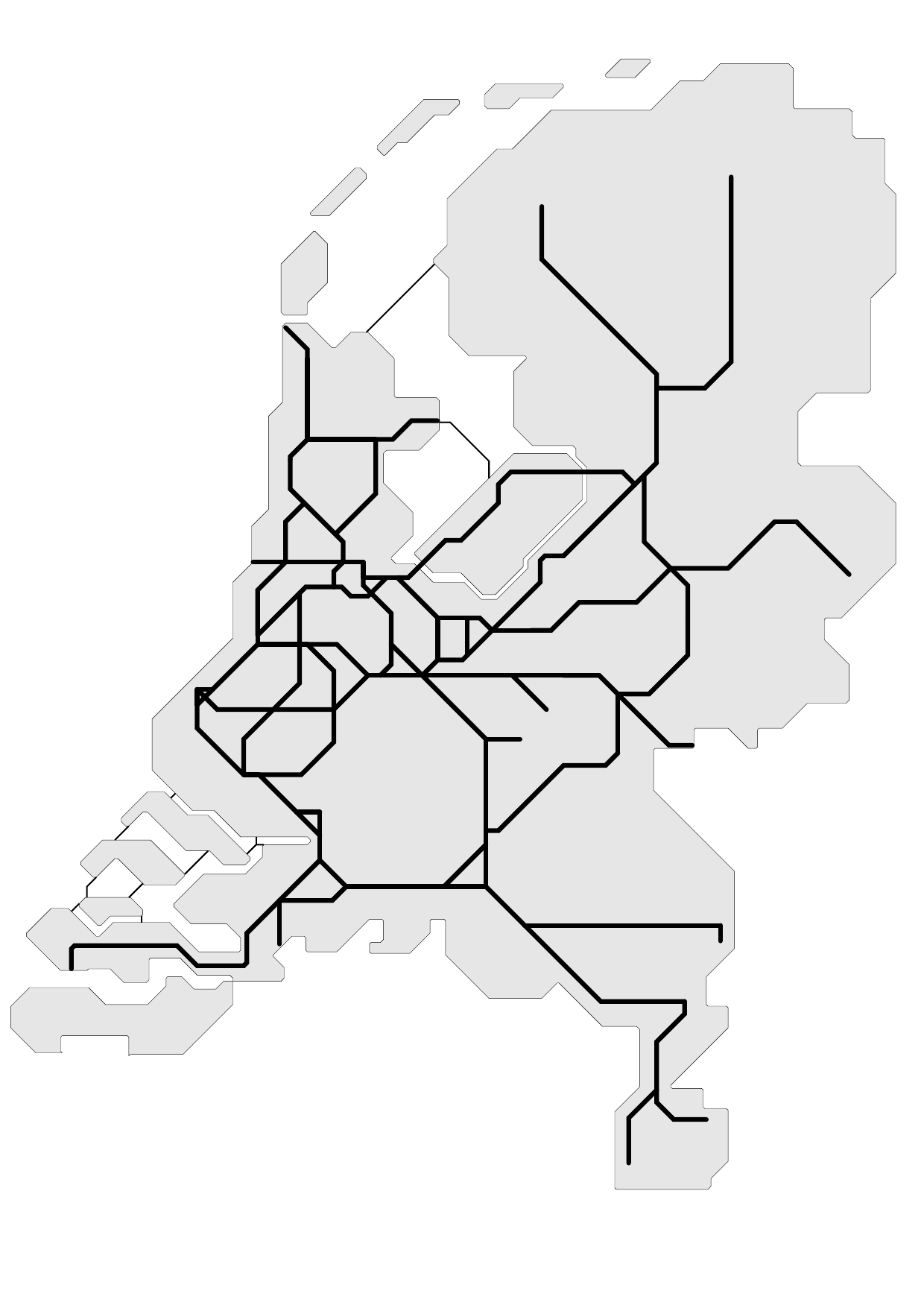}
    \captionof{figure}{Overview of the Dutch railway network}
    \label{fig: dutch railway network}
  \end{minipage}
  \begin{minipage}[b]{0.49\textwidth}
  \nextfloat
  
  \begin{subtable}[c]{0.99\textwidth}	
    \centering    
    \begin{tabular}{lccc}
			\toprule
			Instance & $|\TripSet|$ & $|\TypeSet|$ & $|P|$ \\
			\midrule
   			FLIRT(-AN) & 584 (+1) & 2 & 14 \\
   			SGM(-AN) & 758 & 2 & 14 \\
			SLT & 1681 & 2 & 14 \\
   			DDZ(-AN) & 419 & 2 & 6 \\
      		VIRM(-AN) & 1300 & 2 & 10 \\
			ICM & 1222 & 2 & 30 \\
			\bottomrule
	\end{tabular}
	\caption{Statistics of the considered instances.}
	\label{tab: statistics instances}
  \end{subtable}
  \vskip1em
  \begin{subtable}[c]{0.99\textwidth}
    \centering
    \begin{tabular}{lr}
    	\toprule
    	Parameter & Value \\
    	\midrule
    	Mileage & 0.1 \\
     	Seat shortage & 0.2 \\
     	Shunting & 10 \\
     	Ending deviation & 10000 \\
    	\bottomrule
    \end{tabular}
    \caption{Parameters of the objective function.}
    \label{tab: objective values}
  \end{subtable}
  
  \caption{Test set statistics.}
  \label{tab: test set}
  \end{minipage}

\end{table}

We create rolling stock scheduling instances by considering different rolling stock fleets.
At NS, train unit types are categorized into so-called train unit families.
Only train unit types that belong to the same family can be combined in a composition.
We consider train units of six families: FLIRT, SGM, SLT, DDZ, VIRM, and ICM.
The first three families are used to operate sprinter trains, while the latter three are (mainly) used for intercity services.
All train unit families are composed of two different rolling stock types, i.e., contain train units of two types.
Instances are then created by only considering those trips in the timetable that have, in an earlier step, been designated to be operated by train units of the considered family.
For the train unit families FLIRT, SGM, DDZ, and VIRM, we have two slightly different input specifications available, especially deviating in the number of train units that should end at each station, leading to a total of 10 instances.
Each instance is named according to the train unit family it represents, and the two different instances for the FLIRT, SGM, DDZ, and VIRM families are distinguished by adding ``-AN'' as a subscript in the name for each second instance.

Summary statistics about the instances are shown in \autoref{tab: statistics instances}, where the number of trips in the instance, the number of considered rolling stock types, and the number of possible compositions are given.
Moreover, the underlying trips in the instances are visualized in \autoref{fig:FleetTimetables}.
Based on the summary statistics, DDZ(-AN), FLIRT(-AN) and SGM(-AN) are the smallest instances as they consider the fewest number of trips.
In addition, it can be seen from the visualizations that the trips operated by SGM train units geographically decompose into two parts and that the DDZ instance largely focuses on one corridor in the country.
The SLT instance is the largest in the number of trips and can be seen from the visualization to cover a large part of the Netherlands.
On the other hand, ICM train units can be coupled in longer compositions of up to five train units, giving a larger number of possible compositions than in the other instances.
The VIRM(-AN) instance is in between the SLT and ICM instance in terms of the number of trips and can be seen to especially cover services in the south and western parts of the country.

\begin{figure}[htbp!] 
	\centering
	
	\begin{subfigure}[t]{0.49\textwidth}
		\centering
		\includegraphics[height=0.28\textheight]{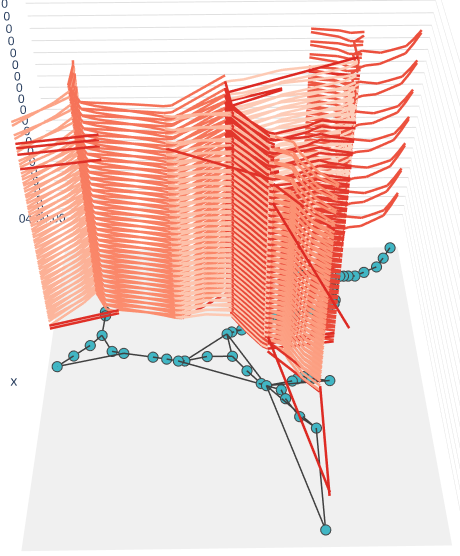}
		\caption{FLIRT}
		\label{fig:FLIRTtimetable}
	\end{subfigure}
	\begin{subfigure}[t]{0.49\textwidth}
		\centering
		\includegraphics[height=0.28\textheight]{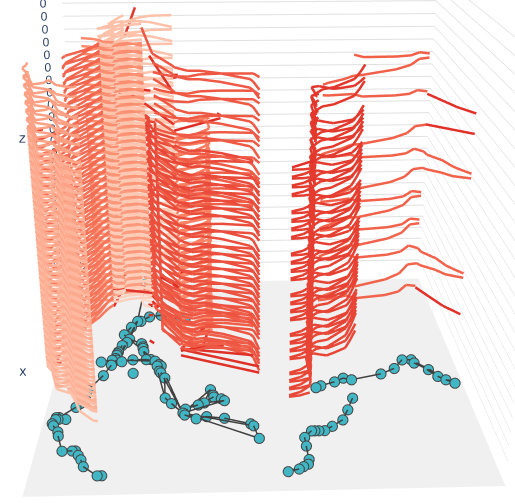}
		\caption{SGM}
		\label{fig:SGMtimetable}
	\end{subfigure}
	
	\begin{subfigure}[t]{0.49\textwidth}
		\centering
		\includegraphics[height=0.28\textheight]{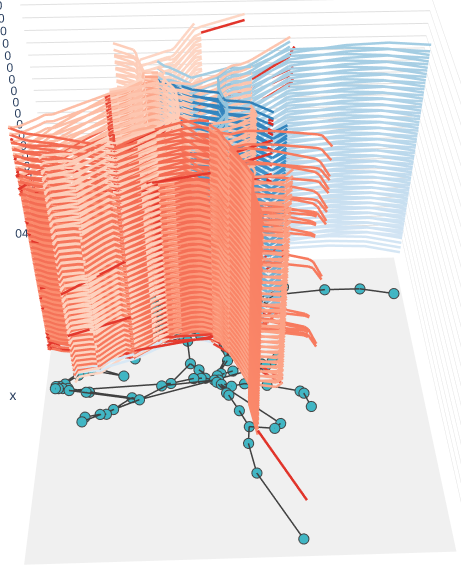}
		\caption{SLT}
		\label{fig:SLTtimetable}
	\end{subfigure}
	\begin{subfigure}[t]{0.49\textwidth}
		\centering
		\includegraphics[height=0.28\textheight]{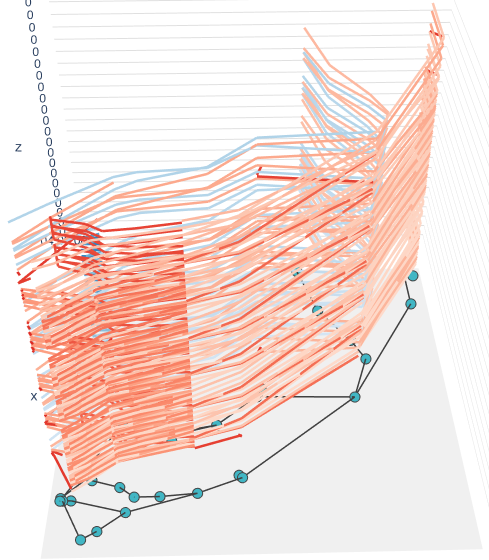}
		\caption{DDZ}
		\label{fig:DDZtimetable}
	\end{subfigure}
	
	\begin{subfigure}[t]{0.49\textwidth}
		\centering
		\includegraphics[height=0.28\textheight]{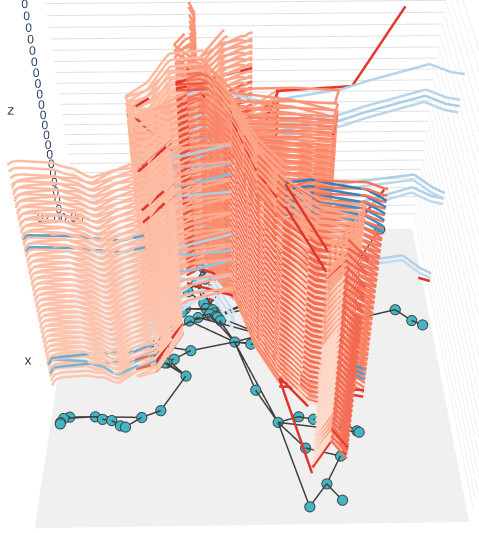}
		\caption{VIRM}
		\label{fig:VIRMtimetable}
	\end{subfigure}
	\begin{subfigure}[t]{0.49\textwidth}
		\centering
		\includegraphics[height=0.28\textheight]{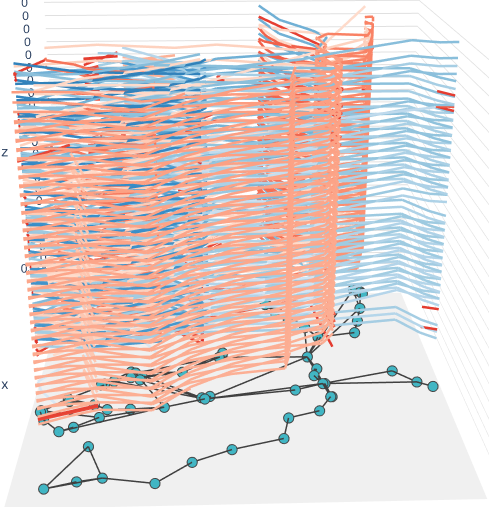}
		\caption{ICM}
		\label{fig:ICMtimetable}
	\end{subfigure}
	
	\caption{Visualization of the timetables underlying the NS instances. Each plot shows the trips in the timetable in space (horizontal axes) and time (vertical axis). Moreover, the color of a line indicates the cheapest composition from the set of suitable compositions to operate the trip in the timetable and the line's thickness indicates this composition's size.}
	\label{fig:FleetTimetables}
\end{figure}

The parameters used for the objective function can be found in \autoref{tab: objective values}.
The first two parameters relate to the operated compositions.
These define the costs per traveled kilometer of a carriage and per seat that is short compared to the expected number of passengers, respectively.
Note that these costs can be directly associated with composition hyperarcs in the \HypergraphModel{} variants and with composition variables in the \CompositionModel{}.
The third parameter relates to the cost of performing a shunting action, which is incurred if either coupling or uncoupling occurs during a connection.
These costs can be directly associated with composition change variables in the \CompositionModel{}, as well as with composition change hyperarcs in the full \HypergraphModel{} variants due to both the predecessor composition and successor composition of such a hyperarc being known in these variants.
The latter is not possible in the small \HypergraphModel{} variants, meaning that these models generally underestimate the true shunting cost. 
The fourth parameter defines the cost of a deviation from the number of train units, of a given rolling stock type, that are expected to end at a station.
These costs can be easily taken into account in all models by including extra variables that measure the deviation to the ending node balances. 

\subsection{Results}
For the numerical comparison, we implemented all models as pure MILP models and solved them with the CPLEX 20.1.0 general-purpose MILP solver.
We have chosen this approach, over using the exact solution methodology used to solve the \CompositionModel{} and \HypergraphModel{} at the two companies, as both are highly specialized for the individual company's needs.
Moreover, the column generation methodology developed for the  \HypergraphModel{} does not easily translate to all the variants of this model.

\paragraph{Model Sizes}
The size of the resulting MILP models, in terms of the number of variables and constraints, is given in \autoref{tab: model sizes} and illustrated in \autoref{fig:sizes} for each of the NS instances.

It can be seen that the small \HypergraphModel{}s \hA{} and \hD{} have the fewest number of constraints, where the direct connection model \hA{} is slightly smaller than the depot model \hD{}. 
The smaller number of constraints for these models can be explained by the contraction of nodes leading to fewer flow conservation constraints, where the direct connections in \hA{} additionally prevent the constraints needed to model the flow in the depot.
The \CompositionModel{} \C{} is close in the number of constraints to the small Hypergraph models due to modeling flow conservation on the composition instead of train unit level.
The full Hypergraph models \HA{} and \HD{} are clearly the largest in terms of the number of constraints, which can be explained by the duplication of nodes for the positions in the compositions and the corresponding duplication of flow conservation constraints.
This especially leads to a large number of constraints for the ICM instance, which allows for a larger number of possible compositions per trip.

Compared to the number of constraints, the differences between the models are more pronounced for the number of variables.
Note that the $y$-axis is given on a logarithmic scale in \autoref{fig:nvars}. 
First, it can be seen that the models that track connections directly (\hA{} and \HA{}) are significantly larger than those models that use a depot representation (\hD{} and \HD{}) or track the inventories implicitly (\C{}).
This effect is especially clear for the ICM instance, which allows for longer compositions and thus provides more direct connection possibilities.
Clearly, the more explicit the rolling stock connections are handled, the larger the model grows in terms of the number of variables.
Second, it can be seen that the full \HypergraphModel{}s \HD{} and \HA{} use about 1.1 -- 1.3 and 1.6 -- 2.8 as many variables as their direct counterparts \hD{} and \hA{}, respectively, and even more for instance ICM.
Hence, the duplication of nodes over compositions leads to a considerable increase in the number of hyperarcs, in particular when there are many feasible compositions as in the ICM instance.
Overall, it can be concluded that \hA{} is the smallest model on all instances in terms of the number of variables, closely followed by the \CompositionModel{}.
The full \HypergraphModel{} \HD{} lags not far behind, followed by a relatively large gap towards the \HypergraphModel{}s \hD{} and \HD{} that use direct connections.

\begin{table}[htbp]
	\caption{Size of the Composition and Hypergraph models, expressed in the number of variables (\textit{Var.}) and constraints (\textit{Cons.}), for each of the NS instances.}
	\label{tab: model sizes}
	
	\resizebox{\textwidth}{!}{
		\begin{tabular}{lcccccccccc}
                \toprule
                & \multicolumn{2}{c}{C} & \multicolumn{2}{c}{\HD} & \multicolumn{2}{c}{\hD} & \multicolumn{2}{c}{\HA} & \multicolumn{2}{c}{\hA} \\
                \cmidrule(rl){2-3} \cmidrule(rl){4-5} \cmidrule(rl){6-7} \cmidrule(rl){8-9} \cmidrule(rl){10-11}
                instance & Var. & Cons. & Var. & Cons. & Var. & Cons. & Var. & Cons. & Var. & Cons. \\
                \midrule
                FLIRT-AN &    7561 &    7941 &    8244 &   11757 &    7346 &    6803 &   15267 &   11151 &    9602 &    5671 \\
                FLIRT    &    7579 &    7944 &    8291 &   11783 &    7347 &    6795 &   15764 &   11182 &    9657 &    5668 \\
                SGM-AN   &   12899 &   12418 &   14875 &   20511 &   12360 &   10320 &   53238 &   19699 &   22891 &    8843 \\
                SGM      &   12899 &   12418 &   14875 &   20513 &   12360 &   10322 &   53296 &   19699 &   22887 &    8843 \\
                SLT      &   21531 &   22818 &   23499 &   33794 &   20837 &   19678 &   58477 &   31668 &   32161 &   16366 \\
                DDZ-AN   &    5905 &    5386 &    7135 &    7775 &    5549 &    4653 &   30048 &    7328 &   13174 &    3868 \\
                DDZ      &    7961 &    8075 &    9298 &   12061 &    7410 &    6703 &   36050 &   10769 &   16148 &    5073 \\
                VIRM-AN  &   18717 &   17590 &   22778 &   27121 &   18128 &   15606 &  109903 &   25627 &   39413 &   13153 \\
                VIRM     &   20477 &   18780 &   24975 &   29582 &   19665 &   16351 &  120768 &   27995 &   43949 &   13805 \\
                ICM      &   42449 &   29431 &   60551 &   67746 &   34014 &   17182 & 2320477 &   66231 &  158006 &   14855 \\
                \bottomrule
            \end{tabular}}
\end{table}

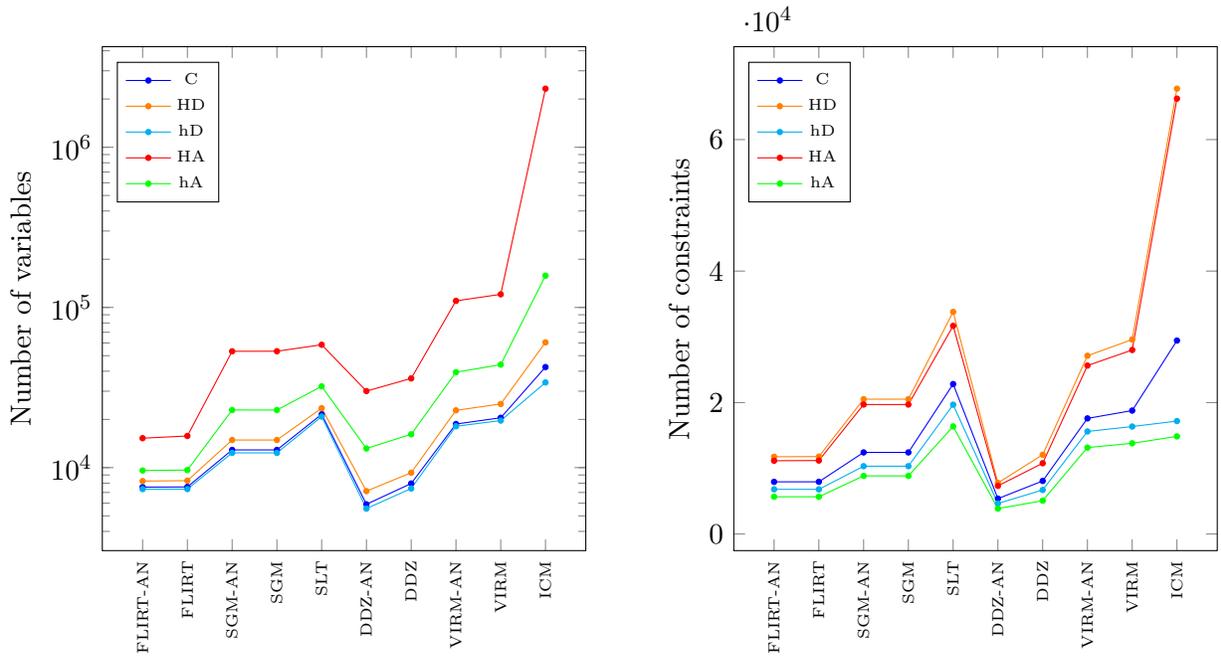
\begin{figure}[htbp]
  \centering%
  \begin{subfigure}[b]{0.49\textwidth}
    \begin{tikzpicture}
    \begin{semilogyaxis}[
      width=0.99\textwidth,
      height=0.38\textheight,
      ylabel={Number of variables},
      symbolic x coords={FLIRT-AN,FLIRT,SGM-AN,SGM,SLT,DDZ-AN,DDZ,VIRM-AN,VIRM,ICM},
      x tick label style={rotate=90, font=\tiny},
      xtick=data,
      legend style={legend pos=north west,legend columns=1, font=\tiny},
    ]
      \addplot[blue, mark=*, mark size=1pt] table[x=instance,y=C-V,col sep=comma]{tables/size_table.csv};
      \addplot[orange, mark=*, mark size=1pt] table[x=instance,y=HD-V,col sep=comma]{tables/size_table.csv};
      \addplot[cyan, mark=*, mark size=1pt] table[x=instance,y=hD-V,col sep=comma]{tables/size_table.csv};
      \addplot[red, mark=*, mark size=1pt] table[x=instance,y=HA-V,col sep=comma]{tables/size_table.csv};
      \addplot[green, mark=*, mark size=1pt] table[x=instance,y=hA-V,col sep=comma]{tables/size_table.csv};
      \legend{$\C$, $\HD$, $\hD$, $\HA$, $\hA$}
    \end{semilogyaxis}
    \end{tikzpicture}
    \caption{Model sizes in terms of numbers of variables.}
    \label{fig:nvars}
  \end{subfigure}
  \hfill
  \begin{subfigure}[b]{0.49\textwidth}
  	 \centering
    \begin{tikzpicture}
    \begin{axis}[
      width=0.99\textwidth,
      height=0.38\textheight,
      ylabel={Number of constraints},
      symbolic x coords={FLIRT-AN,FLIRT,SGM-AN,SGM,SLT,DDZ-AN,DDZ,VIRM-AN,VIRM,ICM},
      x tick label style={rotate=90, font=\tiny},
      xtick=data,
      legend style={legend pos=north west,legend columns=1,font=\tiny},
    ]
      \addplot[blue, mark=*, mark size=1pt] table[x=instance,y=C-C,col sep=comma]{tables/size_table.csv};
      \addplot[orange, mark=*, mark size=1pt] table[x=instance,y=HD-C,col sep=comma]{tables/size_table.csv};
      \addplot[cyan, mark=*, mark size=1pt] table[x=instance,y=hD-C,col sep=comma]{tables/size_table.csv};
      \addplot[red, mark=*, mark size=1pt] table[x=instance,y=HA-C,col sep=comma]{tables/size_table.csv};
      \addplot[green, mark=*, mark size=1pt] table[x=instance,y=hA-C,col sep=comma]{tables/size_table.csv};
      \legend{$\C$, $\HD$, $\hD$, $\HA$, $\hA$}
    \end{axis}
    \end{tikzpicture}
    \caption{Model sizes in terms of numbers of constraints.}
    \label{fig:ncons}
  \end{subfigure}
  \caption{Model sizes for all models and instances.}
  \label{fig:sizes}
\end{figure}

\paragraph{Obtained Bounds}
The objective value and linear programming relaxation bound obtained by each model are given in \autoref{tab: model bounds} and plotted in \autoref{fig:data} for each of the NS instances.
The colors in \autoref{tab: model bounds} group together models that obtain the same values for a given instance. 

\newcommand\opt{\color{blue}}
\newcommand\agg{\color{cyan}}
\newcommand\thr{\color{green}}
\begin{table}[htbp]
	\caption{Obtained linear programming relaxation (\textit{LP}) and integer objective (\textit{MILP}) value for the Composition and Hypergraph models for each of the NS instances.}
	\label{tab: model bounds}
	\resizebox{\textwidth}{!}{
\begin{tabular}{l*{10}r}
\toprule
& \multicolumn{2}{c}{C} & \multicolumn{2}{c}{\HD} & \multicolumn{2}{c}{\hD} & \multicolumn{2}{c}{\HA} & \multicolumn{2}{c}{\hA} \\
\cmidrule(rl){2-3} \cmidrule(rl){4-5} \cmidrule(rl){6-7} \cmidrule(rl){8-9} \cmidrule(rl){10-11}
instance & LP & MILP & LP & MILP & LP & MILP & LP & MILP & LP & MILP \\
\midrule
    FLIRT-AN & \opt18805.3 & \opt18992.2 & \opt18805.3 & \opt18992.2 & \agg18306.2 & \agg18306.2 & \opt18805.3 & \opt18992.2 & \agg18306.2 & \agg18306.2 \\
    FLIRT & \opt17074.4 & \opt17074.4 & \opt17074.4 & \opt17074.4 & \agg16820.8 & \agg16820.8 & \opt17074.4 & \opt17074.4 & \agg16820.8 & \agg16820.8 \\
    SGM-AN & \opt19630.3 & \opt19646.8 & \opt19630.3 & \opt19646.8 & \agg18924.4 & \agg18939.8 & \opt19630.3 & \opt19646.8 & \agg18924.4 & \agg18939.8 \\
    SGM & \opt18998.6 & \opt18998.6 & \opt18998.6 & \opt18998.6 & \agg18290.8 & \agg18299.7 & \opt18998.6 & \opt18998.6 & \agg18290.8 & \agg18299.7 \\
    SLT & \opt 35291.7 & \opt 35291.7 & \opt 35291.7 & \opt 35291.7 & \agg34339.3 & \agg34392.1 & \opt35291.7 & \opt35291.7 & \agg34339.3 & \agg34392.1 \\
    DDZ-AN & \opt20040.2 & \opt20040.2 & \opt20040.2 & \opt20040.2 & \agg 19526.4 & \agg 19580.2 & \opt20040.2 & \opt20040.2 & \agg19526.4 & \agg19580.2 \\
    DDZ & \opt18427.4 & \opt18427.4 & \opt18427.4 & \opt18427.4 & \agg17913.6 & \agg17957.4 & \opt18427.4 & \opt18427.4 & \agg17913.6 & \agg17957.4 \\
    VIRM-AN & \opt98297.4 & \opt98732.7 & \opt98297.4 & \opt98732.7 & \agg95476.8 & \agg96503.9 & \opt98297.4 & \opt98732.7 & \agg95476.8 & \agg96503.9 \\
    VIRM & \opt86941.0 & \opt86947.4 & \opt86941.0 & \opt86947.4 & \agg84028.7 & \agg84459.2 & \opt86941.0 & \opt86947.4 & \agg84028.7 & \agg84459.2 \\
    ICM & \opt 39851.4 & \opt 39885.7 & \opt 39851.4 & \opt 39885.7 & \agg36623.4 & \agg37011.4 & \opt39851.4 & \opt39885.7 & \thr36641.2 & \thr37032.2 \\
    \bottomrule
\end{tabular}}
\end{table}

\begin{figure}[htbp]
  \centering%
  \begin{subfigure}[b]{0.49\textwidth}
	  \centering
	  \begin{tikzpicture}
	  \begin{axis} [%
	    width=0.99\textwidth,
	    height=0.38\textheight,
	    ylabel={LP and IP values},
      	symbolic x coords={FLIRT-AN,FLIRT,SGM-AN,SGM,SLT,DDZ-AN,DDZ,VIRM-AN,VIRM,ICM},
	    x tick label style={rotate=90, font=\tiny},
	    xtick=data,
	    legend style={legend pos=north west,legend columns=1, font=\tiny},
	    ]
	    \addplot+[only marks, mark=-, error bars/.cd, y dir=minus, y explicit]
	        table [x=x,y=y,y error=error] {
	        x           y       error
	       	FLIRT-AN	18992.2	186.9
	        FLIRT	    17074.4	0
	       	SGM-AN	    19646.8	16.5
	        SGM	        18998.6	0
	       	SLT	        35291.7	0
	        DDZ-AN	    20040.2	0
	        DDZ	        18427.4	0
	        VIRM-AN	    98732.7	435.3
	        VIRM	    86947.4	6.4
	        ICM	        39885.7	34.3
	    };
	    \addplot+[cyan, only marks, mark=-,error bars/.cd, y dir=minus, y explicit]
	        table [x=x,y=y,y error=error] {
	        x           y       error
	       	FLIRT-AN	18306.2	0
	        FLIRT	    16820.8	0
	       	SGM-AN	    18939.8	15.4
	       	SGM	        18299.7	8.9
	        SLT	        34392.1	52.8
	        DDZ-AN	    19580.2	53.8
	        DDZ	        17957.4	43.8
	        VIRM-AN	    96503.9	1027.1
	        VIRM	    84459.2	430.5
	        ICM	        37011.4	388
	    };
	    \addplot+[green, only marks, mark=-, error bars/.cd, y dir=minus, y explicit]
	        table [x=x,y=   y,y error=error] {
	        x           y       error
	        ICM	        37032.2	391.0
	    };
	    \legend{$H\cdot$/$C$, $h\cdot$, $hA$}
	  \end{axis} 
	  \end{tikzpicture}
	  \caption{LP and IP values (bottom and top bars).}
	  \label{fig:LPIP data}
  \end{subfigure}
  \hfill
  \begin{subfigure}[b]{0.49\textwidth}
	  \centering
	  \begin{tikzpicture}
	  \begin{axis} [%
	    width=0.99\textwidth,
	    height=0.38\textheight,
	    ylabel={Relative gaps in percent},
	    ylabel shift=0.8em,
      	symbolic x coords={FLIRT-AN,FLIRT,SGM-AN,SGM,SLT,DDZ-AN,DDZ,VIRM-AN,VIRM,ICM},
	    x tick label style={rotate=90, font=\tiny},
	    xtick=data,
	    legend style={legend pos=north west,legend columns=1, font=\tiny},
	    ]
	    \addplot+[only marks, mark=-, error bars/.cd, y dir=plus, y explicit]
	        table [x=x,y=y,y error=error] {
	        x           y       error
	        FLIRT-AN	0       0.98
	        FLIRT	    0       0.00
	       	SGM-AN	    0       0.08
	        SGM	        0       0.00
	        SLT	        0       0.00
	        DDZ-AN	    0       0.00
	        DDZ	        0       0.00
	        VIRM-AN	    0       0.44
	        VIRM	    0       0.01
	        ICM	        0       0.09
	    };
	    \addplot+[cyan, only marks, mark=-,error bars/.cd, y dir=plus, y explicit]
	        table [x=x,y=y,y error=error] {
	        x           y       error
	       	FLIRT-AN	3.61	0.00
	        FLIRT	    1.49	0.00
	        SGM-AN	    3.60	0.08
	        SGM	        3.68	0.05
	        SLT	        2.55	0.15
	        DDZ-AN	    2.30	0.27
	        DDZ	        2.55	0.24
	        VIRM-AN	    2.26	1.04
	        VIRM	    2.86	0.50
	        ICM	        7.21	0.97
	    };
	    \addplot+[green, only marks, mark=-, error bars/.cd, y dir=plus, y explicit]
	        table [x=x,y=y,y error=error] {
	        x           y       error
	        ICM	        7.15	0.98
	    };
	    \legend{$H\cdot$/C,$h\cdot$, $hA$}
	  \end{axis} 
	  \end{tikzpicture}
	  \caption{Relative gaps \mbox{w.r.t.} to $\nu_\IP(\C)=\nu_\IP(\textrm{H}\cdot)$.}
	  \label{fig:relative gaps}
  \end{subfigure}
  \caption{Comparing optimal solution values for all models and instances.}
  \label{fig:data}
\end{figure}
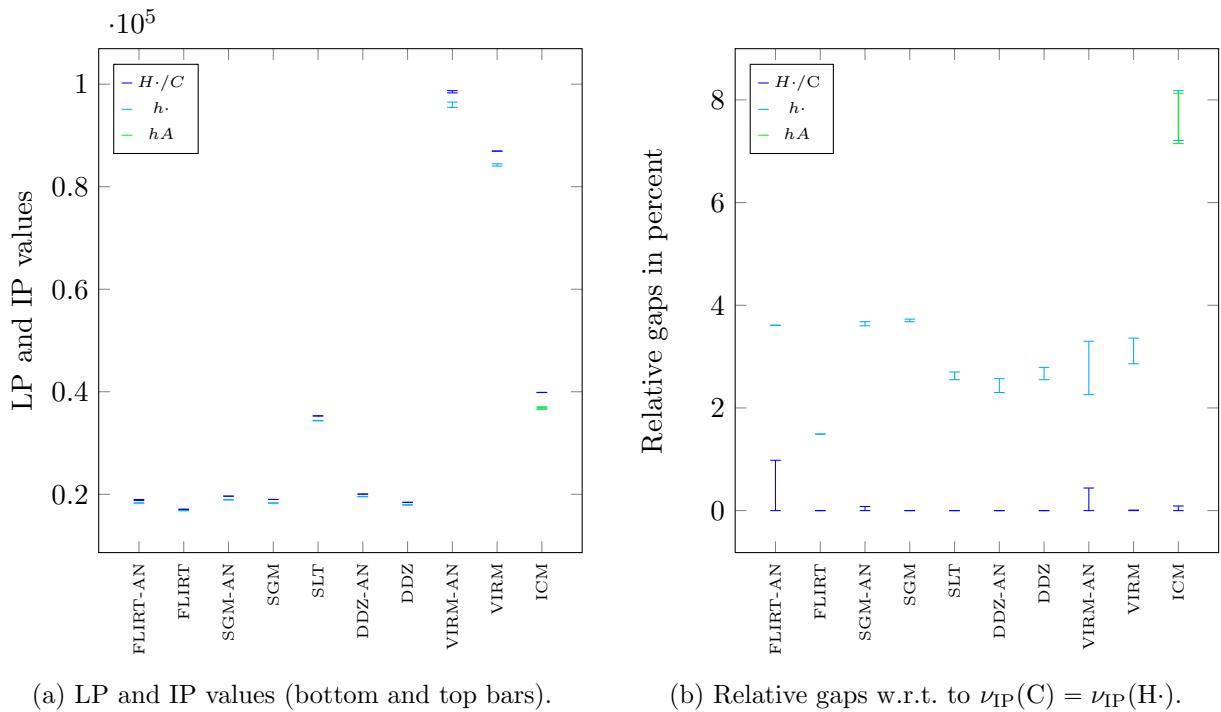

Our results show that the \CompositionModel{} and the full \HypergraphModel{}s (\HA{} and \HD{}) produce the same IP and LP relaxation values for all instances, see the blue values/bars.
This equality between the obtained values for \C{} and \HD{} directly follows from \autoref{prop:inclusions}, and so does the equality between \HA{} and the other models, as the NS instances generally satisfy $A = \bar{A}$.
One can also see that the integrality gap is very small for all instances for these models, where in absolute terms the difference is often smaller than the cost of 1-2 extra shunting actions.
In particular, the optimality gap is zero for six of the ten instances (FLIRT, SGM, SLT, DDZ-AN, DDZ, and VIRM), showing that the LP relaxation gives a very tight approximation of the integer program.
Some differences between the instances are revealed though by looking at the relative integrality gaps. 
FLIRT-AN has the largest integrality gap of $1\%$, followed by VIRM-AN with $0.44\%$, the remaining instances have gaps below $1\permille$.
These differences generally seem to be very instance specific, where larger integrality gaps are especially seen when mixing fractional compositions (e.g., a half red and a half blue composition) leads to a better match between capacity and demand for a given set of trips.

When looking at the small \HypergraphModel{}s (cyan values/bars), it can be seen that these always produce weaker IP and LP results.
This means that these models produce IP solutions that are not feasible within the \CompositionModel{}, indicating that the inequalities in \autoref{prop:inclusions} are indeed often strict for real-life instances.
Looking at the relative gaps reported in \autoref{fig:relative gaps}, it can be seen that the relative gap to the other models is especially big for the ICM instance.
This is also the only instance for which we see a difference between the values of the \hD{} and \hA{} models.
This higher gap and difference between the models can likely be explained by the longer compositions that can be formed in this instance, allowing more composition changes to be formed that are illegal in the other models.
Looking at the size of the optimality gaps, it is harder to make a comparison to the other models.
While the optimality gap is clearly larger for the ICM instance for both \hA{} and \hD{}, an integer LP relaxation solution is found by both models for the instance FLIRT.

\begin{table}[htbp]
	\caption{Cost components of the solutions to the linear programming relaxation (\textit{LP}) and integer program (\textit{MILP}) of the Composition and Hypergraph models for each of the NS instances.}
	\label{tab: cost components}
	\begin{tabularx}{\textwidth}{Xrrrrrrrr}
		\toprule
		& \multicolumn{4}{c}{$\hA{},\hD{}$} & \multicolumn{4}{c}{$\C{}, \HA{}, \HD{}$} \\
		\cmidrule(rl){2-5} \cmidrule(rl){6-9}
		& \multicolumn{2}{c}{LP} & \multicolumn{2}{c}{MILP}&  \multicolumn{2}{c}{LP} & \multicolumn{2}{c}{MILP} \\
		\cmidrule(rl){2-3} \cmidrule(rl){4-5} \cmidrule(rl){6-7} \cmidrule(rl){8-9}
		Instance   & Comp. & Coup. &  Comp. & Coup.& Comp. & Coup. &  Comp.  & Coup.\\
		\midrule
		FLIRT-AN & 18256.2 &  50.0 &  18256.2 &  50.0 & 18535.4 &  270.0 & 18722.2 &  270.0\\
		FLIRT   & 16760.8 &  60.0 &  16760.8 &  60.0 & 16784.4 &  290.0 & 16784.4 &  290.0\\
		SGM-AN  & 18894.4 &  30.0 &  18909.8 &  30.0 & 19016.9 &  613.3 & 19026.8 &  620.0\\
		SGM     & 18250.8 &  40.0 &  18259.7 &  40.0 & 18378.6 &  620.0 & 18378.6 &  620.0\\
		SLT     & 34229.3 & 110.0 &  34282.1 & 110.0 & 34451.7 &  840.0 & 34451.7 &  840.0\\
		DDZ-AN  & 19516.4 &  10.0 &  19570.2 &  10.0 & 19570.2 &  470.0 & 19570.2 &  470.0\\
		DDZ     & 17913.6 &   0.0 &  17957.4 &   0.0 & 17957.4 &  470.0 & 17957.4 &  470.0\\
		VIRM-AN & 95306.7 & 170.0 &  96323.9 & 180.0 & 96254.2 & 2043.2 & 96712.7 & 2020.0\\
		VIRM    & 83858.7 & 170.0 &  84289.2 & 170.0 & 84921.0 & 2020.0 & 84957.4 & 1990.0\\
		ICM     & 36530.0 & 111.2 &  36922.2 & 110.0 & 38080.1 & 1771.3 & 38095.7 & 1790.0\\
		\bottomrule
	\end{tabularx}
\end{table}

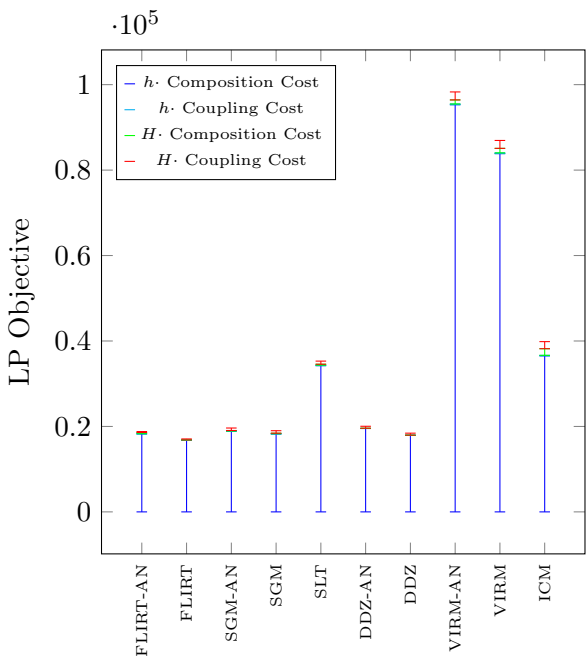
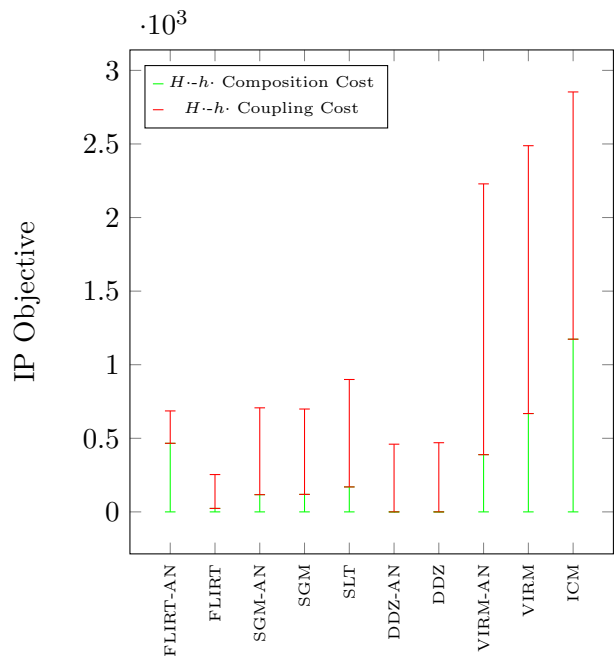
\begin{figure}[htbp]
	\centering%
	\begin{subfigure}[b]{0.49\textwidth}
		\centering
		\begin{tikzpicture}
			\begin{axis} [%
				width=0.99\textwidth,
				height=0.38\textheight,
				ylabel={LP Objective},
				symbolic x coords={FLIRT-AN,FLIRT,SGM-AN,SGM,SLT,DDZ-AN,DDZ,VIRM-AN,VIRM,ICM},
				x tick label style={rotate=90, font=\tiny},
				xtick=data,
				legend style={legend pos=north west,legend columns=1, font=\tiny},
				]
				\addplot+[only marks, mark=-, error bars/.cd, y dir=plus, y explicit]
				table [x=x,y=y,y error=error] {
					x           y       error
					DDZ-AN	    0       19516.4
					DDZ	        0       17913.6
					FLIRT-AN	0       18256.2
					FLIRT	    0       16760.8
					SGM-AN	    0       18894.4
					SGM	        0       18250.8
					VIRM-AN	    0       95306.7
					VIRM	    0       83858.7
					SLT	        0       34229.3
					ICM	        0       36530.0
				};
				\addplot+[cyan, only marks, mark=-,error bars/.cd, y dir=plus, y explicit]
				table [x=x,y=y,y error=error] {
					x           y       error
					DDZ-AN	    19516.4    10
					DDZ	        17913.6     0
					FLIRT-AN	18256.2    50
					FLIRT	    16760.8    60
					SGM-AN	    18894.4    30
					SGM	        18250.8    40
					VIRM-AN	    95306.7   170
					VIRM	    83858.7   170
					SLT	        34229.3   110
					ICM	        36530.0   111.2
				};
				\addplot+[green, only marks, mark=-, error bars/.cd, y dir=plus, y explicit]
				table [x=x,y=   y,y error=error] {
					x           y       error
					DDZ-AN	    19526.4	53.8
					DDZ	        17913.6 43.8
					FLIRT-AN	18306.2	279.2
					FLIRT	    16820.8	23.6
					SGM-AN	    18924.4	122.5
					SGM	        18290.8	127.8
					VIRM-AN	    95476.7	947.5
					VIRM	    84028.7	1062.3
					SLT	        34339.3	222.4
					ICM	        36641.2 1550.1
				};
				\addplot+[red, only marks, mark=-, error bars/.cd, y dir=plus, y explicit]
				table [x=x,y=   y,y error=error] {
					x           y       error
					DDZ-AN	    19580.2	460
					DDZ	        17957.4 470
					FLIRT-AN	18585.4	220
					FLIRT	    16844.4	230
					SGM-AN	    19046.9	583.3
					SGM	        18418.6	580
					VIRM-AN	    96424.2	1873.2
					VIRM	    85091.0	1850
					SLT	        34561.7	730
					ICM	        38191.3 1660.1
				};
				\legend{ $h\cdot$ Composition Cost, $h\cdot$ Coupling Cost, $H\cdot$ Composition Cost, $H\cdot$ Coupling Cost}
			\end{axis} 
		\end{tikzpicture}
		\caption{Cost components of $\nu_\IP(H\cdot)$ and $\nu_\IP(\textrm{h}\cdot)$.}
		\label{fig:LPCompData}
	\end{subfigure}
	\hfill
	\begin{subfigure}[b]{0.49\textwidth}
		\centering
		\begin{tikzpicture}
			\begin{axis} [%
				width=0.99\textwidth,
				height=0.38\textheight,
				ylabel={IP Objective},
				ylabel shift=0.8em,
				symbolic x coords={FLIRT-AN,FLIRT,SGM-AN,SGM,SLT,DDZ-AN,DDZ,VIRM-AN,VIRM,ICM},
				x tick label style={rotate=90, font=\tiny},
				xtick=data,
				scaled y ticks=base 10:-3,
				legend style={legend pos=north west,legend columns=1, font=\tiny}
				]
				\addplot+[green, only marks, mark=-, error bars/.cd, y dir=plus, y explicit]
				table [x=x,y=y,y error=error] {
					x           y       error
					FLIRT-AN	0       466.0
					FLIRT	    0       23.6
					SGM-AN	    0       117.0
					SGM	        0       118.9
					SLT	        0       169.6
					DDZ-AN	    0       0.00
					DDZ	        0       0.00
					VIRM-AN	    0       388.8
					VIRM	    0       668.2
					ICM	        0       1173.5
				};
				\addplot+[red, only marks, mark=-,error bars/.cd, y dir=plus, y explicit]
				table [x=x,y=y,y error=error] {
					x           y       error
					FLIRT-AN	466.0     220
					FLIRT	    23.6      230
					SGM-AN	    117.0     590
					SGM	        118.9     580
					SLT	        169.6     730
					DDZ-AN	    0.00      460
					DDZ	        0.00      470
					VIRM-AN	    388.8     1840
					VIRM	    668.2     1820
					ICM	        1173.5    1680
				};
				\legend{$H\cdot$-$h\cdot$ Composition Cost, $H\cdot$-$h\cdot$ Coupling Cost}
			\end{axis} 
		\end{tikzpicture}
		\caption{Cost components of gap $\nu_\IP(H\cdot) - \nu_\IP(\textrm{h}\cdot)$.}
		\label{fig:h-H-IP-gap}
	\end{subfigure}
	\caption{Analyzing the cost components of $\nu_{M}(h\cdot)$ and $\nu_{M}(\textrm{H}\cdot)$.}
	\label{fig:CompData}
\end{figure}

Further insight into how the small and full \HypergraphModel{}s compare to each other is given by \autoref{tab: cost components}, where the different cost components are shown, i.e., composition and coupling costs for each solution of the respective model. 
These results are visualized in \autoref{fig:LPCompData} and \autoref{fig:h-H-IP-gap}.
Here, \autoref{fig:LPCompData} gives the composition and coupling cost for the small \HypergraphModel{}s and, on top of that, the additional composition and coupling costs in the full \HypergraphModel{}s.
The difference is further investigated in \ref{fig:h-H-IP-gap}, where the distribution of additionally required composition and coupling cost for a solution to the full \HypergraphModel{}s compared to the small models is given.
Two conclusions can be drawn immediately. 
The first is that the LP relaxation's coupling cost is a very sound approximation of the IP's coupling cost as it differs only in 5 out of 20 cases, with a maximum absolute deviation of only 30. 
The second is that the small \HypergraphModel{}s underestimate the coupling costs resulting from the \CompositionModel{}, respectively, the full \HypergraphModel{}s by a significant amount. 
Indeed, the solutions to the small \HypergraphModel{}s spend at most $\approx3.2\permille$ for coupling while the solutions to the full models spend at least $\approx2.4\%$ for coupling. 
Regarding the composition costs required to operate an instance, it can be seen that the smaller models do give a sound approximation.

When looking at the found integrality gaps, two instances stand out. 
FLIRT-AN is the instance with the largest integrality gap for the full \HypergraphModel{}s while there is no integrality gap for the small models. 
Further analysis showed that this larger gap is particularly the result of two sequences of connected trips, which both contain a trip during the rush hour with a high seat shortage when operated in a single train unit composition.
Unless uncoupling can be done afterwards, these trips would force the whole sequences to be operated in a double train unit composition.
It turns out that such uncoupling actions are not optimal in the \C{}, \HA{}, and \HD{} models due to the train unit ending inventories.
However, their LP relaxations can operate these sequences with a fractional train unit composition, thus resulting in significantly lower composition costs for the LP models and a relatively large optimality gap.
The second instance that stands out in \autoref{fig:relative gaps} is VIRM-AN.
A closer examination reveals that the solutions to the LP-relaxations of the full and small \HypergraphModel{}s require different amounts of coupling, which leads to a larger amount of deviating compositions between the LP and the IP solutions.
This explains the gap between the objective function values.

\paragraph{Computation Time}
Lastly, we look at the computation time required to solve each of the models and their corresponding linear programming relaxation.
The results for each of the NS instances are given in \autoref{tab: solution times} and visualized in \autoref{fig:runtimes}. 
The computations were performed on an Intel\textsuperscript{\textregistered} Core(TM) i7-9700K CPU @ 3.60GHz with 8 cores and 8 threads.

\begin{table}[htbp]
	\caption{Computation time in seconds for solving the linear programming relaxation (\textit{LP}) and integer program (\textit{MILP}) of the Composition and Hypergraph models for each of the NS instances.}
	\label{tab: solution times}
\begin{tabularx}{\textwidth}{Xrrrrrrrrrr}
\toprule
& \multicolumn{2}{c}{C} & \multicolumn{2}{c}{\HD} & \multicolumn{2}{c}{\hD} & \multicolumn{2}{c}{\HA} & \multicolumn{2}{c}{\hA} \\
\cmidrule(rl){2-3} \cmidrule(rl){4-5} \cmidrule(rl){6-7} \cmidrule(rl){8-9} \cmidrule(rl){10-11}
instance & LP & MILP & LP & MILP & LP & MILP & LP & MILP & LP & MILP \\
\midrule
FLIRT-AN &   0.02 &   0.09 &   0.05 &   0.12 &   0.17 &   0.33 &   0.05 &   0.38 &   0.09 &   0.25 \\
FLIRT    &   0.02 &   0.09 &   0.03 &   0.10 &   0.10 &   0.33 &   0.03 &   0.40 &   0.10 &   0.25 \\
SGM-AN   &   0.09 &   0.33 &   0.18 &   0.41 &   0.31 &   0.92 &   0.27 &   1.49 &   0.32 &   1.43 \\
SGM      &   0.07 &   0.21 &   0.11 &   0.31 &   0.30 &   0.82 &   0.27 &   1.01 &   0.30 &   0.93 \\
SLT      &   0.12 &   0.19 &   0.16 &   0.38 &   0.55 &   1.43 &   0.32 &   1.55 &   0.59 &   1.87 \\
DDZ-AN   &   0.05 &   0.10 &   0.12 &   0.26 &   0.11 &   0.54 &   0.12 &   1.07 &   0.14 &   0.94 \\
DDZ      &   0.25 &   0.11 &   0.05 &   0.22 &   0.12 &   0.44 &   0.13 &   1.07 &   0.13 &   0.71 \\
VIRM-AN  &   0.35 &   1.50 &   0.73 &   2.14 &   1.09 &   5.16 &   0.97 &   8.74 &   0.50 &   7.84 \\
VIRM     &   0.34 &   0.99 &   0.73 &   1.41 &   0.86 &   5.54 &   1.02 &   7.59 &   0.56 &   5.49 \\
ICM      &   2.92 &   4.28 &   5.45 &   7.35 &   1.89 &  35.18 &  93.86 & 2502.83 &   3.21 & 143.28 \\
\bottomrule
\end{tabularx}
\end{table}

\begin{figure}[htbp]
  \centering%
  \begin{subfigure}[b]{0.49\textwidth}
    \begin{tikzpicture}
    \begin{semilogyaxis}[
      width=0.99\textwidth,
      height=0.38\textheight,
      ylabel={LP run times in seconds},
      symbolic x coords={FLIRT-AN,FLIRT,SGM-AN,SGM,SLT,DDZ-AN,DDZ,VIRM-AN,VIRM,ICM},
      x tick label style={rotate=90, font=\tiny},
      xtick=data,
      legend style={legend pos=north west,legend columns=1, font=\tiny},
      ymin=.01, ymax=10000,
    ]
      \addplot[blue, mark=*, mark size=1pt] table[x=instance,y=C-LP,col sep=comma]{tables/runtime_table.csv};
      \addplot[orange, mark=*, mark size=1pt] table[x=instance,y=HD-LP,col sep=comma]{tables/runtime_table.csv};
      \addplot[cyan, mark=*, mark size=1pt] table[x=instance,y=hD-LP,col sep=comma]{tables/runtime_table.csv};
      \addplot[red, mark=*, mark size=1pt] table[x=instance,y=HA-LP,col sep=comma]{tables/runtime_table.csv};
      \addplot[green, mark=*, mark size=1pt] table[x=instance,y=hA-LP,col sep=comma]{tables/runtime_table.csv};
      \legend{$\nu_\LP(\C)$, $\nu_\LP(\HD)$, $\nu_\LP(\hD)$, $\nu_\LP(\HA)$, $\nu_\LP(\hA)$}
    \end{semilogyaxis}
    \end{tikzpicture}
    \caption{LP run times.}
    \label{fig:LP runtime}
  \end{subfigure}
  \hfill
  \begin{subfigure}[b]{0.49\textwidth}
    \begin{tikzpicture}
    \begin{semilogyaxis}[
      width=0.99\textwidth,
      height=0.38\textheight,
      ylabel={IP run times in seconds},
      symbolic x coords={FLIRT-AN,FLIRT,SGM-AN,SGM,SLT,DDZ-AN,DDZ,VIRM-AN,VIRM,ICM},
      x tick label style={rotate=90, font=\tiny},
      xtick=data,
      legend style={legend pos=north west,legend columns=1, font=\tiny},
      ymin=.01, ymax=10000,
    ]
      \addplot[blue, mark=*, mark size=1pt] table[x=instance,y=C-IP,col sep=comma]{tables/runtime_table.csv};
      \addplot[orange, mark=*, mark size=1pt] table[x=instance,y=HD-IP,col sep=comma]{tables/runtime_table.csv};
      \addplot[cyan, mark=*, mark size=1pt] table[x=instance,y=hD-IP,col sep=comma]{tables/runtime_table.csv};
      \addplot[red, mark=*, mark size=1pt] table[x=instance,y=HA-IP,col sep=comma]{tables/runtime_table.csv};
      \addplot[green, mark=*, mark size=1pt] table[x=instance,y=hA-IP,col sep=comma]{tables/runtime_table.csv};
      \legend{$\nu_\IP(\C)$, $\nu_\IP(\HD)$, $\nu_\IP(\hD)$, $\nu_\IP(\HA)$, $\nu_\IP(\hA)$}
    \end{semilogyaxis}
    \end{tikzpicture}
    \caption{IP run times.}
    \label{fig:IPruntime}
  \end{subfigure}
  \caption{Comparing computation times for all models and instances.}
  \label{fig:runtimes}
\end{figure}
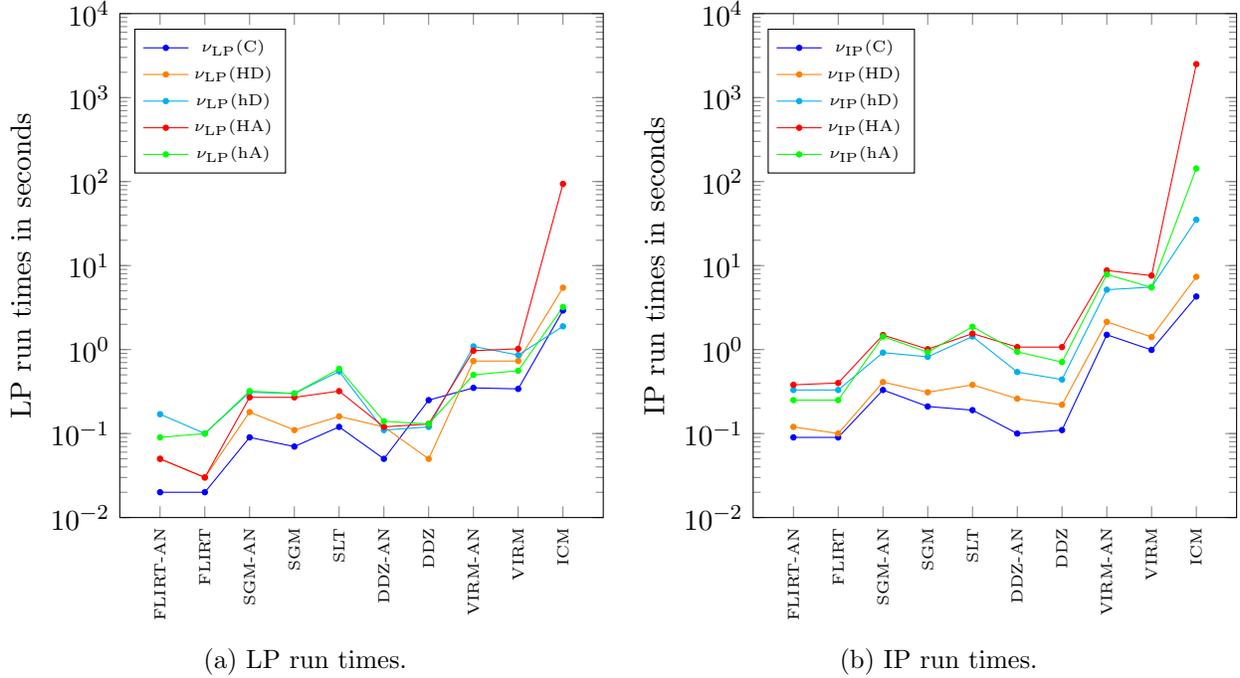

Most instances can be solved quickly by all of the models, where the computation time required to solve the linear programming relaxation and integer program is below 10 seconds for all instances except ICM.
Especially the sprinter instances FLIRT(-AN), SGM(-AN), and SLT can be solved quickly, with a computation time of less than 2 seconds for all of the models.
The instance that stands out as the hardest is the intercity ICM instance, for which a longer computation time can both be seen when solving the linear programming relaxations and integer programs.
This can likely be explained by the larger number of compositions possible for this instance.
Especially the longer computation time of the \HA{} model should be noted, which is in line with the large size of this model for the ICM instance.

When comparing the models, it can be seen that the \CompositionModel{} has the lowest computation time for solving the integer program for all instances, and often has one of the lowest solving times when solving the linear programming relaxation.
Closest in computation time for the integer program is the full \HypergraphModel{} \HD{}.
That this model also performs well is likely due to it providing the same linear programming bound as the \CompositionModel{} and its relatively compact size in terms of variables, even though it is significantly larger in terms of the number of constraints.
It can be seen that the small \HypergraphModel{}s often perform well in terms of solving the linear programming relaxation, but lack behind more significantly for the integer programs.
This is especially the case for the larger ICM instance.
The computation times of the non-contracted model \HA{} are overall the longest, which is in line with the larger size of this model.
As noted before, especially the long solution time for the ICM instance is notable for this model.

Another interesting observation from our results is the relatively large computation time of the linear programming relaxation compared to the full integer program.
This can be explained by the results we found on the strength of the linear programming relaxation, meaning that branching is often not or only to a limited degree required when solving the integer program.
The only exception to this is for the ICM instance, for which we also see a significantly larger gap for some of the models between the time needed to solve the linear programming relaxation and the integer program.

\paragraph{Discussion}
Overall, the numerical results show that the \CompositionModel{} performs best for our test instances.
It has the lowest computation time for solving the integer program for all instances and has an LP relaxation computation time that is among the best for most instances.
Moreover, it provides the same linear programming bound as the full \HypergraphModel{}s but is more compact than these models, in terms of both the number of constraints and variables.
This difference in size, and computation time, is especially apparent when comparing to the \HD{} model for the larger ICM instance.
While the small \HypergraphModel{}s are generally also compact in size, our results show that these models provide integer solutions that are not feasible in practice for the NS instances.

\section{Conclusion}
\label{sec: conclusion}

In this paper, we compared models that have been proposed for the scheduling of rolling stock at passenger railway operators.
Through a literature review, we have shown how models differ due to differences in the operational settings of railway operators, particularly leading to differences in the handling of compositions, turnings, maintenance requirements, and passenger demand.
Our literature review also shows that a number of clear streams can be identified in the literature, which either make use of the same core model or focus on the same problem setting.

In an analytic comparison, we further analyzed the core models used in two streams of the literature:  the \CompositionModel{} that has been proposed for the setting of Netherlands Railways (NS) and the \HypergraphModel{} that has been proposed for the setting of DB Fernverkehr AG (DB).
In our theoretical analysis, we focused on the rolling stock scheduling setting of NS, suggesting different variants of the \HypergraphModel{} to vary the compactness and expressiveness of this model.
We show that the linear programming relaxation provided by the \CompositionModel{} is not necessarily the tightest of all models, but that the linear programming relaxations of all high quality models are equivalent in many cases of practical relevance, even if not all sufficient  equivalence conditions are met.

Lastly, we have compared the performance of the \CompositionModel{} and the different variants of the \HypergraphModel{} numerically on real-world instances of NS, which represent different rolling stock sub-fleets.
Our analysis shows that the \CompositionModel{} combines compactness, especially in terms of the number of variables, with a very tight linear programming bound.
These results are also reflected in the computation times, which are overall the lowest for the \CompositionModel{}.
However, some of the variants of the \HypergraphModel{} also solve quickly on all instances. 
Clear differences exist though between the \HypergraphModel{} variants.

Our results show that many factors are of importance when solving rolling stock scheduling problems.
Solving such problems thus requires a careful selection and tuning of a most appropriate model, where the appropriateness of a model depends strongly on the operational setting.
Our analysis particularly shows the analytic and numerical relations between two commonly used models within the context of the NS setting.
A similar analysis could clarify the relations between other rolling stock scheduling models, and study different operational contexts.

\section{Acknowledgments}
This work has been partly supported by the Research Campus MODAL (Mathematical Optimization and Data Analysis Laboratories), funded by the Federal Ministry of Education and Research (BMBF Grant 05M14ZAM). Moreover, we would like to thank Erasmus Trustfonds for partly funding Rowan Hoogervorst's research visit to Zuse Institute Berlin (ZIB), during which this work was started.

\bibliographystyle{informs2014trsc}
\bibliography{bibliography}

\newpage
\appendix

\end{document}